\newcolumntype{L}{>{\centering\arraybackslash}m{3cm}}
\newcommand{\PROOF}[1]{\noindent \textbf{Proof of #1.}}
\newcommand{\cN}{\mathcal{N}}
\newcommand{\bR}{\mathbb{R}}
\newcommand{\bI}{\mathbf{1}}
\newcommand{\ls}{\lambda^{(s)}}
\newcommand{\lsd}{\lambda^{(s+\delta)}}
\newcommand{\inv}{^{-1}}
\renewcommand{\vec}[1]{\boldsymbol{#1}}
\newcommand{\UCBlin}{\mathsf{OFUL}}
\newcommand{\AdaptedOFUL}{\mathsf{SemiUCB}}
\newcommand{\UCBours}{\mathsf{ConsUCB}}
\newcommand{\da}{\text{\ding{172}}\xspace}
\newcommand{\db}{\text{\ding{173}}\xspace}
\newcommand{\dc}{\text{\ding{174}}\xspace}
\newcommand{\dd}{\text{\ding{175}}\xspace}
\newcommand{\de}{\text{\ding{176}}\xspace}
\begin{document}

\RUNAUTHOR{Jin et al.}

\RUNTITLE{Large Scale Product Selection}

\TITLE{Shrinking the Upper Confidence Bound: A Dynamic Product Selection Problem for Urban Warehouses}

\ARTICLEAUTHORS{%
\AUTHOR{Rong Jin}
\AFF{Alibaba Group, San Mateo, CA 94402, \EMAIL{jinrong.jr@alibaba-inc.com}}
\AUTHOR{David Simchi-Levi}
\AFF{Institute for Data, Systems, and Society, Department of Civil and Environmental Engineering, and Operations Research Center, Massachusetts Institute of Technology, Cambridge, MA 02139, \EMAIL{dslevi@mit.edu}}
\AUTHOR{Li Wang}
\AFF{Operations Research Center, Massachusetts Institute of Technology, Cambridge, MA 02139, \EMAIL{li\_w@mit.edu}}
\AUTHOR{Xinshang Wang}
\AFF{Institute for Data, Systems, and Society, Massachusetts Institute of Technology, Cambridge, MA 02139, \EMAIL{xinshang@mit.edu}}
\AUTHOR{Sen Yang}
\AFF{Alibaba Group, San Mateo, CA 94402, \EMAIL{senyang.sy@alibaba-inc.com}}
}

\ABSTRACT{
The recent rising popularity of ultra-fast delivery services on retail platforms fuels the increasing use of urban warehouses, whose proximity to customers makes fast deliveries viable.
The space limit in urban warehouses poses a problem for such online retailers: the number of products (SKUs) they carry is no longer ``the more, the better'', yet it can still be significantly large, reaching hundreds or thousands in a product category.
In this paper, we study algorithms for dynamically identifying a large number of products (i.e., SKUs) with top customer purchase probabilities on the fly, from an ocean of potential products to offer on retailers' ultra-fast delivery platforms.

We distill the product selection problem into a semi-bandit model with linear generalization.
There are in total $N$ arms, each with a feature vector of dimension $d$.
The player pulls $K$ arms in each period and observes the bandit feedback from each of the pulled arms.
We focus on the setting where $K$ is much greater than the number of total time periods $T$ or the dimension of product features $d$.
We first analyze a standard UCB algorithm and show its regret bound can be expressed as the sum of a $T$-independent part $\tilde O(K d^{3/2})$ and a $T$-dependent part $\tilde O(d\sqrt{KT})$, which we refer to as ``fixed cost'' and ``variable cost'' respectively.
To reduce the fixed cost for large $K$ values, we propose a novel online learning algorithm, which iteratively shrinks the upper confidence bounds within each period, and show its fixed cost is reduced by a factor of $d$ to $\tilde O(K \sqrt{d})$.
Moreover, we test the algorithms on an industrial dataset from Alibaba Group. Experimental results show that our new algorithm reduces the total regret of the standard UCB algorithm by at least 10\%.
}

\KEYWORDS{
sequential decision making, product selection, online learning, online retailing, stochastic optimization, regret analysis
} 

\maketitle

\section{Introduction}
\label{sec:intro}

In this paper, we study a large-scale product selection problem, motivated by the rising popularity of ultra-fast delivery on retail platforms.
With ultra-fast delivery services such as same-day or instant delivery, customers receive parcels within hours after placing online orders, whereas traditional deferred delivery usually takes one to five days.
The demand for ultra-fast delivery is strong: a survey conducted in 2016 shows that 25\% of customers are willing to pay significant premiums for the service \citep{McKinsey}.
On the supply side, many online retailers' ``spin-off'' platforms that offer ultra-fast delivery have emerged in the market, for example, Amazon Prime Now, Walmart Jet.com, Ocado, and Alibaba Hema.
As a result, the retail market for ultra-fast delivery has enjoyed exponential growth over the past few years.
In the United States, the total order value of same-day delivery merchandise reached 4.03 billion dollars in 2018, up from 0.1 billion dollars in 2014 \citep{statista}.

The rapid growth in the ultra-fast delivery retail market is accompanied by the increasing use of urban warehouses \citep{JLL}.
Their proximity to customers is the key in making ultra-fast delivery viable for retailing models, especially in high-traffic cities like New York City and San Francisco.
Online retailers typically integrate the urban warehouses for their fast-delivery ``spin-off'' platforms into their existing supply chains, by having them replenished by large suburban distribution centers that originally support their ``traditional'' online retail platforms only.
However, due to the limited space in urban warehouses, the retail platforms offering ultra-fast delivery carry fewer products than the traditional online platforms that are directly fulfilled by distribution centers.
Hence, retailers often face the challenge of selecting a good subset of different products (i.e., SKUs) to offer on the ultra-fast delivery platforms, from an ocean of potential products that are carried in the distribution centers.

Thanks to modern inventory-management technologies, such as Kiva robots, the cost of maintaining a diversified inventory is no more than that of managing an inventory with only a few types of products \citep{QZ}.
Thus, online retailers offering ultra-fast delivery are disposed to carry more SKUs than the set of most popular items such as the market-leader or mainstream products with major market shares.
By increasing product breadth, they gain an edge over local grocery stores or supermarkets, in terms of satisfying customers' sporadic demands for products with medium-to-low popularities.

For example, as of October 2018, Prime Now, Amazon's two-hour delivery retail platform, offers around 1,500 different products in the toys category at zip code 02141 (Cambridge, MA).
Meanwhile, a local CVS store at the same zip code only carries around 100 popular toys, whereas the traditional online retail platform Amazon.com includes more than 90,000 SKUs related to toys.
Table \ref{table:1} summarizes product coverages of different retailers, where products are partitioned into two types: 1) market-leader, and 2) medium/low-demand.
The focus in this paper is on selecting an optimal set of products with medium-to-low demands for online retailers with urban warehouses.

\begin{table}[h!]
\centering
\begin{tabular}{|c|c|c|c|}
\hline
 & \multirow{2}{*}{Brick and mortar} & \multicolumn{2}{c|}{Online retail platforms}\\
 \cline{3-4}
 & & Urban warehouse & Distribution center \\
\hline
Market-leader products & Most/All & All & All  \\
\hline
Medium/low-demand products& None/Few  & \textbf{Some (paper focus)} & Most  \\
\hline
\end{tabular}
\caption{Table of product assortment coverages of different retailers}
\label{table:1}
\end{table}

For urban warehouse retailers, the most popular items can be readily identified from their sales volumes.
However, with their total number of SKUs in each category reaching hundreds, if not thousands, it is a more difficult task to accurately estimate the popularities of a large number of relatively low-demand products.
Fortunately, online retailers are capable of adjusting the product sets in their urban warehouses, by exchanging a part of the inventory with their distribution centers during regular replenishments.
This enables the retailers to learn each product's probability of being sold through dynamic product offering, which in turn makes the problem a more complicated task of sequential decision-making than a pure demand estimation problem.

In this paper, we study different algorithms for dynamically selecting an optimal set of products for online retailers that use urban warehouses to support their ultra-fast delivery services.
The algorithms learn the demand levels of all products on the fly, and give retailers the ability to have their product sets tailored to every city, or even every zip code, better catering to customers' geodemographic preferences.
We distill the product selection problem into a semi-bandit model with linear generalization. For this model, we first provide an alternative analysis of a popular existing algorithm, which we call $\AdaptedOFUL$, that is adapted from the algorithm $\UCBlin$ proposed by \citet{NIPS2011_4417}.
Then, we propose a novel algorithm called $\UCBours$, and show it is superior in both theoretical and numerical performances, especially when the retailer selects products and observes their sales in large batches.

\subsection{Model Overview}

We consider an online retailer who wants to find an optimal set of products (SKUs) in each category to be offered on her ultra-fast delivery retail platform in a certain city area.
In the rest of the paper, we use ``product'' and ``SKU'' interchangeably.

Due to the use of urban warehouses, the storage space, after including the market-leader products identified from past sales data, is limited.
In each product category, assuming products take similar storage space, the retailer would like to select additional $K$ SKUs from a catalog of $N$ products that have medium-to-low customer demands.
Because of their relatively low popularities, there is a very limited amount of available sales data related to the candidate products in that city.
Therefore, the retailer is interested in learning the optimal set of products through experimentation.
Specifically, in each period over a $T$-period sales time horizon, she selects $K$ products to offer and observes whether customers purchase those products, which helps her learn the product demands more accurately and then adjust the product set accordingly in the next periods. 

By only considering products with relatively low demands in this problem (see the discussion of product coverage summarized in Table \ref{table:1}), we assume their sales are independent.
The reason is that the demands for the $N$ candidate products are typically direct demands, in the sense that customers specifically search for these products for some particular reasons.
In the case where some products are not included in the offered product set, their demands, if not lost, 
are often captured by the market-leader products.
Therefore, the probability of shifting from one low-popularity product to another is very low.

We assume the candidate products' probabilities of realizing positive sales in each period are an unknown linear function of their $d$-dimensional feature vectors that are known a priori and fixed over time.
Specifically, for any product with feature vector $x \in \bR^d$, its positive-sales probability in any period is $x^{\top}\theta^*$, where $\theta^*$ is unknown.

\begin{figure}[h]
    \centering
    \includegraphics[scale=0.7]{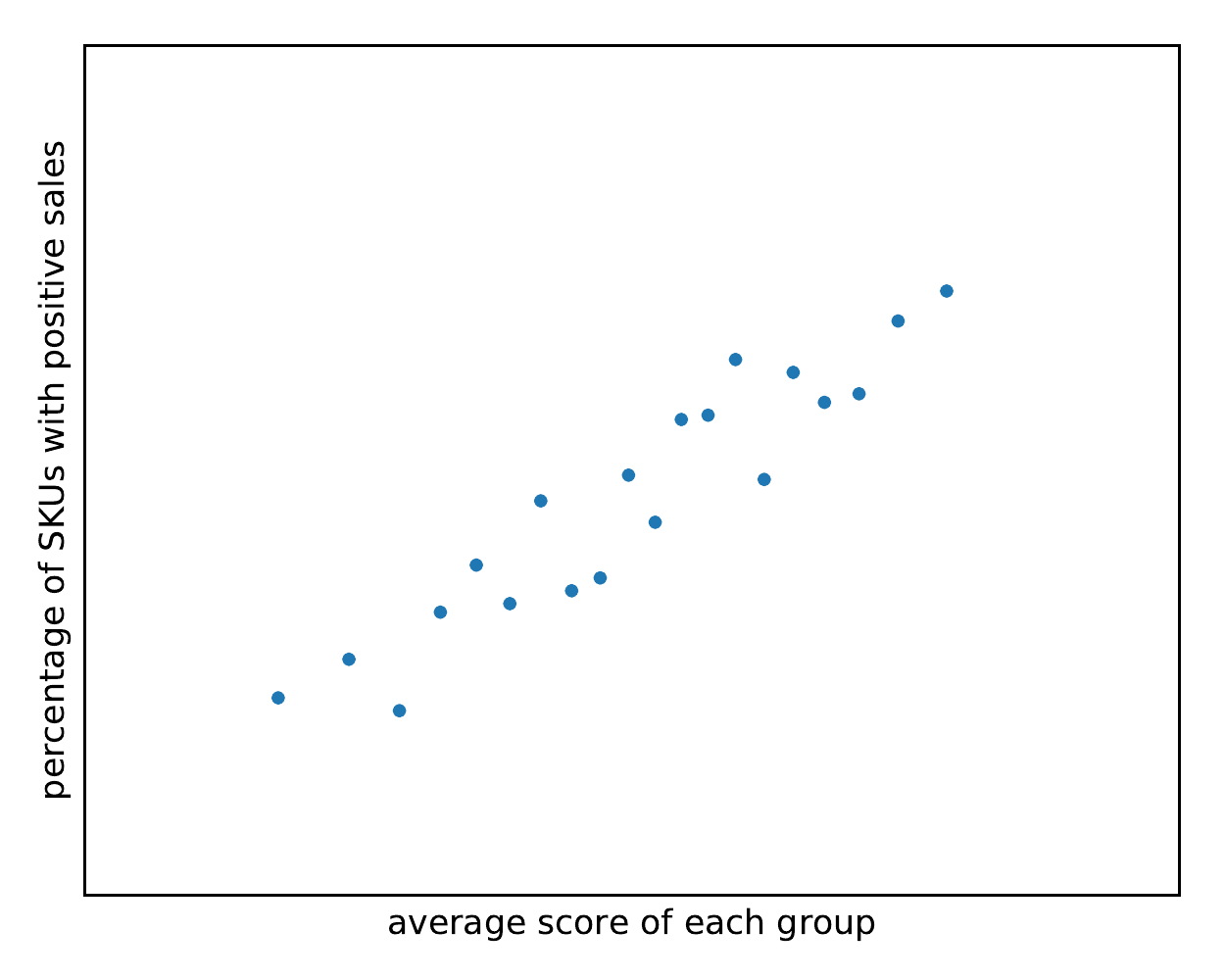}
    \caption{Sales data from Alibaba Tmall suggest a near-linear relationship between products' probabilities of positive sales and their scores, which are linear in product features.}
    \label{fig:linearModel}
\end{figure}

We examine the linear model assumption between products' features and their probabilities of positive sales on industrial data from Alibaba Tmall, the largest business-to-consumer online retail platform in China.
We select around $N\approx 20,000$ products in the toys category with medium-to-low popularities in a city region.
Each product is associated with a $50$-dimensional feature vector, which is derived from the product's intrinsic features as well as product-related user activities, like clicks, additions to cart, and purchases, by Alibaba's deep learning team using representation learning \citep{Bengio2013}.
Two consecutive weeks of the region's unit sales data are translated into binary sales data indicating whether a product had positive sales in each week.

To test the validity of the linear assumption, we first fit a linear model for the first-week binary sales with the product features as covariates.
Then, for all products, we compute their scores based on the feature vectors using the fitted model, and divide the products into groups of size around 1,000 according to their scores in ascending order.
The positive-sales probabilities are estimated by the arithmetic averages of the products' second-week binary sales for all groups.
In Figure~\ref{fig:linearModel}, the estimated positive-sales probabilities are plotted against the average scores for all 20 groups, and a near-linear relationship is clearly identified.
Since the scores are linear in the features, this suggests that a linear relationship between the products' features and  probabilities of positive sales can be reasonably assumed.

Since fast-delivery retailing is a rapid-growing business, such retailers' objective in the early stages after launching the platforms is typically focused on user growth and retention, by providing customers with good shopping experiences.
Therefore, in this problem, we assume the retailer's goal is to maximize the number of products with positive sales, which represent successes in matching customers' demands.
Then, the expected reward in each period is the sum of positive-sales probabilities of the offered products.
We measure an algorithm's performance by its regret, the loss in total expected reward compared to the optimal expected reward.

Such a model is usually referred to as \emph{semi-bandits with linear generalization}.
In the literature, more general models like \emph{contextual combinatorial semi-bandits} have been studied mainly for different applications such as personalized movie recommendations or online advertising \citep{Yue2011,doi:10.1137/1.9781611973440.53,Wen:2015:ELL:3045118.3045237}.
In those applications, the number of recommendations $K$ is typically very small compared to the number of periods $T$.
Indeed, \citet{Yue2011} explicitly make the assumption that $K \leq d$ (recall that $d$ is the dimension of the feature vector space).

In the task of choosing optimal product sets for urban-warehouse retailers, however, the number of products selected in every period is usually very large ($K\geq 1000$ products), whereas the learning process typically needs to be within a quarter ($T\leq 26$, for semiweekly updates), to reduce operations cost and potential long-term consumer confusion.
Motivated by this problem setting, we focus on a different regime of problem parameters in the semi-bandit model with linear generalization, where the number of selections $K$ is large but the number of periods $T$ is small.

Directly applying existing algorithms and analyses in this setting results in regret bounds that grow rapidly in $K$ (see the discussion of regret bounds of different algorithms summarized in Table \ref{table:2}).
Therefore, this raises the following research question that we aim to address in this work:
\begin{quote}
\textit{How can a retailer optimize the exploration-exploitation tradeoff in selecting optimal product sets from a pool of $N \geq 20000$ products, when there are only a few time periods ($T\leq26$) available for experimentation, but she is able to observe sales information for a large number of products ($K \geq1000$) in each period?}
\end{quote}
Regarding whether this task is achievable, we point out that, even though the number of periods $T$ is small, a good algorithm can still effectively learn the true model parameter $\theta^*$ on the fly, because there are a significant number $KT$ of sales observations after all.

Since the time horizon we consider is no more than three months, we assume the set of candidate products remain unchanged.
Moreover, products' intrinsic characteristics and their related user behaviors are in general unlikely to have major changes during the relatively short time horizon.
Hence, we assume that the product features are fixed over time in our problem setting.
It is worth noting that, such assumptions can be removed once the time horizon is over, given $\theta^*$ is accurately estimated in the end.

For the purpose of presenting a clean model, we defer more discussions of the model's connection to practice to later sections. For instance, in Section 5, we provide a numerical study on the consequence of not considering any shipping constraints when adjusting the product sets in the model. In Section 6, we explain other possible variants of the model.

\subsection{Main Contributions}

In addition to numerically testing algorithm performances on an online retailer's data in Section~\ref{sec:numerical}, we make two main technical contributions in this paper:
\begin{enumerate}
	\item We provide an alternative analysis of a common Upper Confidence Bound algorithm, which we call $\AdaptedOFUL$, in our model setting (Section~\ref{sec:UCB1}). The main idea behind $\AdaptedOFUL$ is to use an ellipsoid to construct the confidence region for $\theta^*$. This technique has been studied in various semi-bandit models (c.f. \citet{Yue2011, doi:10.1137/1.9781611973440.53}). We contribute to the literature by proving an alternative regret bound for this algorithm when $K$ is large.
	 
Our new regret bound of $\AdaptedOFUL$ can be expressed as a combination of two parts:
	(i) The first part is $\tilde O(d\sqrt{KT})$, which is sub-linear in both $K$ and $T$ ($\tilde O(\cdot)$ hides logarithmic factors). We call this part the ``variable cost'', as it increases with the length of time horizon $T$. The variable cost is a standard regret term in linear contextual bandit problems after playing $KT$ arms (selecting $KT$ products).
	(ii) The second part is $\tilde O(K d^{3/2})$, which is linear in $K$ and independent of $T$. We call it the ``fixed cost'', since it is independent of the length of the time horizon. The fixed cost is due to the unobservable feedback within selecting $K$ products in each period.
	
	
We also show that the fixed cost of $\AdaptedOFUL$ is at least $\Omega(K\min(d,T))$. From the business point of view, this lower bound is discouraging, because over the $T \leq 26$ periods, the total regret $\Omega(K\min(d,T))$ of the algorithm could be of the same order as the total reward $O(K T)$. In other words, the standard UCB technique and its immediate extensions could lead to arbitrarily bad performance over the entire season.  
This motivates us to devise new exploration-exploitation techniques to beat this lower bound on the fixed cost.

	\item	In order to improve on the fixed cost of $\AdaptedOFUL$, we propose a novel ``conservative'' confidence bound that shrinks within each period, and propose a new algorithm $\UCBours$ (Section~\ref{sec:UCB2}). Using the conservative confidence bound and selecting $K$ products sequentially within each period, the new algorithm intelligently takes advantage of the abundance of sales observations in each period and enjoys an improved fixed cost.
	
The regret bounds proved in this paper are summarized and compared with existing results in Table \ref{table:2}.
Although the models in \citet{Yue2011, Wen:2015:ELL:3045118.3045237, doi:10.1137/1.9781611973440.53} are more general than ours, their regret bounds often involve terms such as $K \sqrt{d T}$ and are thus not suitable when $K$ is large. By contrast, in the regret bound of $\UCBours$, the fixed cost that is linear in $K$ is only $\tilde O(K\sqrt{d})$. 

It is also worth noting that the $\sqrt{d}$ factor in the fixed cost of $\UCBours$ is due to linear generalization. In practice, this $\sqrt{d}$ factor and other logarithmic factors are replaced by a parameter that is tuned to achieve good empirical performance (see Section \ref{sec:numerical}). Despite these factors related to linear generalization, the fixed cost of $\UCBours$ is only $O(K)$. For online retailers, it implies that the fixed cost of $\UCBours$ is no more than the optimal reward over only a few periods. 
On the other hand, there is a simple $\Omega(K)$ lower bound on the fixed cost of \emph{any algorithm}, since an $\Omega(K)$ regret in the \emph{first time period} is unavoidable. Therefore, the fixed cost of $\UCBours$ is optimal except for a $\tilde O(\sqrt{d})$ factor caused by linear generalization.


\begin{table}[h!]
\centering
\small
\begin{tabular}{|c|c|c|c|}
\hline
\textbf{Paper} & \textbf{Algorithm technique} & \begin{tabular}{cc} \textbf{``Fixed cost''}  \\ \textbf{regret}\end{tabular} & \begin{tabular}{cc} \textbf{``Variable cost''}  \\ \textbf{regret}\end{tabular}\\
\hline
\citet{Yue2011} & UCB & - & $\tilde O(K \sqrt{d T} + d\sqrt{K T})$\\
\hline
\citet{Wen:2015:ELL:3045118.3045237} & Thompson sampling & - & $\tilde O(K \sqrt{dT \min(\ln N, d)})$\\
\hline
\citet{Wen:2015:ELL:3045118.3045237} & UCB & - & $\tilde O(K d \sqrt{T})$\\
\hline
\citet{doi:10.1137/1.9781611973440.53} & UCB  & - & $\tilde O(K \sqrt{d T} + d\sqrt{K T})$\\
\hline
this paper (Section~\ref{sec:UCB1})&  \begin{tabular}{cc} UCB  \\ (standard algorithm $\AdaptedOFUL$)\end{tabular} & $\tilde O(K d^{3/2})$ & $\tilde O(d \sqrt{KT})$\\
\hline
 this paper (Section~\ref{sec:UCB2})& \begin{tabular}{cc}UCB with conservative exploration \\ (new algorithm $\UCBours$)\end{tabular}& $\tilde O(K\sqrt{d})$ & $\tilde O(d \sqrt{KT})$\\
\hline
\end{tabular}
\caption{Regret bounds of algorithms adapted to our model setting where $K$ is much larger than $T$ or $d$.}
\label{table:2}
\end{table}

\end{enumerate}

	In the final part of the paper, we use Alibaba's data to test and compare the performances of $\AdaptedOFUL$ and $\UCBours$. Numerically, we show that $\UCBours$ reduces about $10\%$ of the regret of $\AdaptedOFUL$ in the first ten periods.

\subsection{Related Literature}

Our model is a type of multi-armed bandit problem, if we view the $N$ products as $N$ distinct arms. In classic problems of multi-armed bandit, a player sequentially pulls arms without initially knowing which arm returns the highest reward in expectation. The player needs to update the strategy of pulling arms based on the bandit feedbak of the pulled arm in each round, in order to minimize the total regret over time. For a broader review on multi-armed bandit problems, we refer the reader to \citet{Bubeck2012, Slivkins2017}.

There are two special characteristics of our model. First, we assume the reward of pulling an arm is a linear function of an embedding feature vector of the arm. Second, in each step the player is able to pull a very large number of different arms and then observe the bandit feedback for \emph{each} of the pulled arms.

In the literature, multi-armed bandit models assuming linear reward/payoff functions are often referred to as linear contextual bandits. \citet{auer2002using, DHK08, doi:10.1287/moor.1100.0446, pmlr-v15-chu11a, NIPS2011_4417, Bubeck2012b, AgrawalG12} propose and analyze different algorithms for linear models in which the player is able to pull only \emph{one arm} in each period. The sampling algorithm in \citet{RV14a} also applies to this linear setting. Notably, the $\AdaptedOFUL$ algorithm that we analyze in Section \ref{sec:UCB1} is a direct extension of the OFUL algorithm in \citet{NIPS2011_4417}. We are able to provide a new type of analysis of $\AdaptedOFUL$ for our special model (see Section \ref{sec:UCB1}), in which the player can pull \emph{a large number} of different arms (each  arm is associated with a feature vector) in each period.

When the reward of each arm is a linear function of covariates that are drawn i.i.d. from a diverse distribution, \citet{Goldenshluger2013, Bastani, MikeWei} propose online learning algorithms whose regret bounds are proved to be poly-log in $T$. However, their regret bounds scale at least linearly with the total number of arms $N$. By contrast, the regret bounds of our algorithms are independent of $N$.

When the player can pull multiple arms in each period and observe the bandit feedback from each of the pulled arms, the model is often referred to as semi-bandits \citep{Audibert2012}. In the literature, most semi-bandit models view the player's action as a vector in $\{0,1\}^N$ (c.f. \citet{Cesa-Bianchi2012, Gai2012, pmlr-v28-chen13a}); they do not further assume the reward of each arm $i \in [N]$ to be a linear function. 

To our knowledge, only \citet{Yue2011}, \citet{Gabillon2014}, \citet{doi:10.1137/1.9781611973440.53}, \citet{Wen:2015:ELL:3045118.3045237} study semi-bandit models in which the reward of each arm is generalized to a linear function. The models in these papers are more general than ours, as they allow for combinatorial constraints or submodular reward functions. However, their research focuses on applications in which $K$ is much smaller than $T$. Thus, their regret bounds grow rapidly in $K$. The regret bounds in \citet{Yue2011}, \citet{doi:10.1137/1.9781611973440.53}, \citet{Wen:2015:ELL:3045118.3045237} contain $O(K\sqrt{dT})$. The regret bound in \citet{Gabillon2014} contains $O(K \sum_i \frac{1}{\Delta_i})$, where $\Delta_i$ are gaps between sub-optimal and optimal arms. By contrast, the regret bound of our $\UCBours$ algorithm is $\tilde O(K \sqrt{d} + d\sqrt{KT})$, in which the term that is linear in $K$ is only $\tilde O(K\sqrt{d})$.

\section{Model Formulation}
\label{sec:model}

Throughout the paper, we use $[k]$ to denote the set $\{1,2,\ldots,k\}$ for any positive integer $k$.

There are $N$ distinct products with low-to-medium customer demands.
There are $T$ periods, and in each period $t\in[T]$, the retailer selects a set of $K$ products, denoted as $S_t$, from $[N]$ to offer on her retail platform.

Each product $i\in[N]$ has a feature vector $x_i\in\mathbb{R}^d$ that is known to the retailer in advance and stays fixed over time.
The probability of positive sales of product $i$ in any period is denoted as $\mu(i)$, which is linear in its feature vector $x_i$, i.e., $\mu(i)=x_i^\top\theta^*$ for some unknown vector $\theta^*\in\mathbb{R}^d$.
We assume $\Vert \theta^*\Vert\leq 1$ and $\mu(i) \in [0,1]$, $\Vert x_{i}\Vert\leq 1$ for each product $i\in[N]$.

In each period $t$, the binary random variable $r_{t,i}\in\{0,1\}$ denotes whether the realized sales of product $i$ in period $t$ are positive, for each product $i$ in the selected product set $S_t$.
We assume $r_{t,i}$ is independent across products and across time periods, and its expected value $\mathbb{E}[r_{t,i}]=\mu(i)=x_i^\top\theta^*$.

The expected reward in each period is the sum of positive sales probabilities of the offered products, and the $T$-period total expected reward is $\sum_{t=1}^T\sum_{i\in S_t}\mu(i)$.
The optimal product set $S^* \in\argmax\limits_{S\subset[N],|S|=K}\sum_{i\in S}\mu(i)$ is a set of $K$ products with the highest probabilities of positive sales.
The performance of any online algorithm is measured by its regret, which is defined as
\[
R(T)=\sum_{t=1}^T\sum_{i\in S^*}\mu(i)-\sum_{t=1}^T\sum_{i\in S_t}\mu(i).
\]

In each period $t \in [T]$, the retailer makes the decision $S_t$ based on all the past information including $\{r_{t',i}\}_{t' =1,2,\ldots,t-1; i \in S_{t'}}$. The goal is to minimize the total regret over the $T$ time periods.

\section{Alternative Analysis of a Standard UCB Algorithm}
\label{sec:UCB1}

In this section, we focus on a popular existing UCB algorithm, which we call $\AdaptedOFUL$, that has been widely used in practice and analyzed in theory.


We provide an alternative analysis of $\AdaptedOFUL$ and prove a new regret bound, which consists of a $T$-independent ``fixed cost'' $\tilde{O}(Kd^{3/2})$ and a $T$-dependent ``variable cost'' $\tilde{O}(d\sqrt{KT})$.
This alternative analysis allows us to more accurately evaluate the regret terms for $\AdaptedOFUL$, especially in our model where $K$ is significantly larger than $T$.

We also give an example to show an $\Omega(K \min(d,T))$ lower bound on the regret of $\AdaptedOFUL$, which illustrates the algorithm's potential weakness in some cases.
This lower bound implies that, in the regret bound of $\AdaptedOFUL$, the fixed cost is at least $\Omega(K d)$, which is much more significant than the variable cost $\tilde{O}(d\sqrt{KT})$ in our problem setting.
Hence, the idea of modifying $\AdaptedOFUL$ to reduce the fixed cost leads to the development of the new algorithm $\UCBours$ in Section~\ref{sec:UCB2}.

\subsection{A Standard UCB Algorithm for Semi-Bandits with Linear Generalization}
$\AdaptedOFUL$ is adapted from the standard UCB algorithm $\UCBlin$ designed for linear contextual bandits \citep{NIPS2011_4417}.
The difference is that, for $\AdaptedOFUL$, the model is updated once $K$ products (arms) are offered (pulled) in each period, whereas, for $\UCBlin$, the model is updated every time one product (arm) is offered (pulled) in each period.
If $K$ is set to $1$, the two algorithms become the same.
$\AdaptedOFUL$ is presented step by step in the following part.

\
\\
$\AdaptedOFUL$ algorithm for semi-bandits with linear generalization (with input parameters $\alpha, \omega$):
\begin{enumerate}
    \item Initialize $A_0= \omega I_{d\times d}$ and $b_0=\vec{0}_{d}$.
    \item Repeat for $t=1,2,3,\ldots,T$
    \begin{itemize}
        \item[\ \ ] (a) Set $\theta_t= A_{t-1}^{-1}b_{t-1}$.
        \item[\ \ ] (b) Calculate $p_{t}(i)=x_{i}^\top\theta_t+\alpha\sqrt{x_{i}^\top A_{t-1}^{-1}x_{i}}$, for all $i=1,2,3,\ldots,N$.
        \item[\ \ ] (c) Offer product set $S_t\in\argmax\limits_{S\subset\cN,|S|=K}\left\{\sum_{i\in S}p_{t}(i)\right\}$.
        Observe outcomes $r_{t,i}\in\{0,1\}$ for $i\in S_t$.
        \item[\ \ ] (d) Update $A_t= A_{t-1}+\sum_{i\in S_t}x_{i}x_{i}^\top$ and $b_t= b_{t-1}+\sum_{i\in S_t}r_{t,i}x_{i}$.
    \end{itemize}
\end{enumerate}

\

The algorithm $\AdaptedOFUL$ we present above is an extension of $\UCBlin$ \citep{NIPS2011_4417}, and it can also be considered as a special case of its combinatorial version $\mathsf{C^2UCB}$ \citep{doi:10.1137/1.9781611973440.53}. From \citet{doi:10.1137/1.9781611973440.53}, we know that if $\AdaptedOFUL$ is run with $\alpha=\sqrt{d\log{\left(\frac{1+TN/K}{\delta}\right)}}+\sqrt{K}$ and $\omega = K$, then the regret of the algorithm is $\tilde O(K \sqrt{d T} + d\sqrt{K T})$ with probability at least $1 - \delta$.

\subsection{New Regret Bound for a Standard UCB Algorithm}
In this section, we provide an alternative analysis of the regret of $\AdaptedOFUL$, when applied to semi-bandit models with linear generalization.

We prove the regret of $\AdaptedOFUL$ is $\tilde{O}(Kd^{3/2}+d\sqrt{KT})$, which is a sum of two parts.
The first part $\tilde{O}(Kd^{3/2})$ is largely due to the model's inability to observe product sales feedback within selecting $K$ products in each period, and is shown to be independent of the number of time periods, $T$.
The second part $\tilde{O}(d\sqrt{KT})$ is a common regret term for UCB-type algorithms in linear contextual bandit problems.

If we consider the regret terms as ``costs'' that an algorithm has to pay in the learning process, the first part of the regret resembles an ``fixed cost'', as it does not increase with $T$, while the second part is similar to a ``variable cost'', as it increases with $T$.

\subsubsection{Existing results related to linear contextual bandits.} We first introduce some existing results in the literature that we will use later on. For any positive definite matrix $A\in\mathbb{R}^{d \times d}$, we define the weighted $2$-norm of any vector $x\in\mathbb{R}^{d}$ as
\[
\lVert x\rVert_{A}=\sqrt{x^\top Ax}.
\]

Recall that, for each product $i\in[N]$, $\mu(i)$ is $x_i^\top\theta^*$ and $p_{t}(i)$ is defined as $x_{i}^\top\theta_t+\alpha\sqrt{x_{i}^\top A_{t-1}^{-1}x_{i}}$ for each period $t\in[T]$ in $\UCBlin$.
\begin{lemma}[{\citet{doi:10.1137/1.9781611973440.53}, Lemma 4.1}]
\label{lm:qin_lem_4.1}
If we run $\AdaptedOFUL$ with $\alpha = \sqrt{d\log\left(\frac{1+TN}{\delta}\right)}+1$ and $\omega = 1$, then we have, with probability at least $1-\delta$, for all periods $t\in[T]$ and all products $i\in[N]$,
\[
0\leq p_{t}(i)-\mu(i)\leq 2\alpha\lVert x_{i}\rVert_{A^{-1}_{t-1}}.
\]
\end{lemma}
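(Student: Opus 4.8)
The plan is to derive the two-sided bound from a single high-probability event: that $\theta^*$ lies in the ellipsoid $\{\theta : \lVert \theta - \theta_t\rVert_{A_{t-1}} \le \alpha\}$ for every period $t\in[T]$ simultaneously. Granting this event, the rest is a deterministic Cauchy--Schwarz computation; all of the probabilistic content is in establishing the ellipsoid, which is the standard self-normalized confidence set for the ridge-regression estimate $\theta_t = A_{t-1}^{-1}b_{t-1}$ built from the $(t-1)K$ observations available at the start of period $t$.

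\emph{The deterministic step.} Since $p_t(i) = x_i^\top\theta_t + \alpha\sqrt{x_i^\top A_{t-1}^{-1}x_i} = x_i^\top\theta_t + \alpha\lVert x_i\rVert_{A_{t-1}^{-1}}$ and $\mu(i) = x_i^\top\theta^*$, we have $p_t(i) - \mu(i) = x_i^\top(\theta_t - \theta^*) + \alpha\lVert x_i\rVert_{A_{t-1}^{-1}}$. Writing $x_i^\top(\theta_t - \theta^*) = (A_{t-1}^{-1/2}x_i)^\top(A_{t-1}^{1/2}(\theta_t-\theta^*))$ and applying Cauchy--Schwarz gives, on the ellipsoid event, $|x_i^\top(\theta_t-\theta^*)| \le \lVert x_i\rVert_{A_{t-1}^{-1}}\,\lVert\theta_t - \theta^*\rVert_{A_{t-1}} \le \alpha\lVert x_i\rVert_{A_{t-1}^{-1}}$. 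Substituting the two signs of this inequality into the identity above yields $0 \le p_t(i) - \mu(i) \le 2\alpha\lVert x_i\rVert_{A_{t-1}^{-1}}$, which is exactly the claim.

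\emph{The concentration step.} It remains to show that, with probability at least $1-\delta$, $\lVert\theta_t - \theta^*\rVert_{A_{t-1}} \le \alpha$ for all $t\in[T]$. Index the observations $\{(x_i, r_{t',i})\}$ chronologically, breaking ties within a period arbitrarily. Because $S_{t'}$ is $\mathcal{F}_{t'-1}$-measurable, each feature vector $x_i$ in the stream is predictable; because the $r_{t',i}$ are mutually independent given the history, each noise term $r_{t',i} - \mu(i)$ has conditional mean zero and, being supported in $[-1,1]$, is conditionally subgaussian. The self-normalized tail inequality for linear bandits (see Theorem 2 of \citet{NIPS2011_4417}, with regularizer $\omega = 1$) then gives, uniformly in $t$, $\lVert\theta_t - \theta^*\rVert_{A_{t-1}} \le \sqrt{2\log\big(\det(A_{t-1})^{1/2}/\delta\big)} + \sqrt{\omega}\,\lVert\theta^*\rVert$. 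Using $\mathrm{tr}(A_{t-1}) = \omega d + \sum_{t' < t}\sum_{i\in S_{t'}}\lVert x_i\rVert^2 \le d + (t-1)K$, AM--GM on the eigenvalues gives $\det(A_{t-1}) \le (1 + (t-1)K/d)^d$, and since $(t-1)K \le TK \le TN$ together with $\lVert\theta^*\rVert\le 1$ this bounds the right-hand side by $\sqrt{d\log((1+TN)/\delta)} + 1 = \alpha$, as required.

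The only genuinely delicate point is the martingale setup in the concentration step: the self-normalized bound is usually phrased with one observation per round, whereas here $K$ observations arrive simultaneously in each period. The resolution — serializing the in-period observations and noting that mutual independence keeps each conditional mean at $\mu(i)$ even after conditioning on earlier same-period draws — is routine but must be stated carefully. Matching the precise constant in the stated $\alpha$ (Bernoulli noise is in fact $\tfrac12$-subgaussian, which only improves the constant, and the $d=1$ edge case is absorbed by the additive $1$) is likewise bookkeeping rather than a real obstacle.
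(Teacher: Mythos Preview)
The paper does not prove this lemma; it is quoted as an existing result from \citet{doi:10.1137/1.9781611973440.53} (their Lemma~4.1), which in turn rests on the self-normalized confidence ellipsoid of \citet{NIPS2011_4417}. Your argument is exactly the standard derivation behind that cited result --- the ellipsoid event $\lVert\theta_t-\theta^*\rVert_{A_{t-1}}\le\alpha$ followed by Cauchy--Schwarz --- and is correct, including the point about serializing the $K$ same-period observations to fit the martingale framework.
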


For each period $t \in [T]$, let $x_{(t,1)}, x_{(t,2)},\ldots, x_{(t,K)}$ denote an arbitrary permutation of the $K$ feature vectors $\{x_i : i \in S_t\}$.

Define
\[
A_{t,k}=A_{t-1}+\sum_{i=1}^k x_{(t,i)}x_{(t,i)}^\top,
\]
for each period $t\in[T]$ and each $k\in[K]\cup \{0\}$. The following lemma is a direct result of Lemma~3 in \citet{pmlr-v15-chu11a}, by assuming the $KT$ product selections are made in a model where only one product needs to be selected in each period for a total of $KT$ periods.

\begin{lemma}[{\citet{pmlr-v15-chu11a}, Lemma 3}] If we run $\AdaptedOFUL$ with $\omega = 1$, then
\label{lm:chu_lem_3}
\[
\sum_{t=1}^T
\sum_{k=1}^K\lVert x_{(t,k)}\rVert_{A^{-1}_{t,k-1}}
\leq
5\sqrt{dKT\log(KT)}
.
\]
\end{lemma}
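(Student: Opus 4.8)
The plan is to run the classical elliptical-potential (log-determinant) argument, viewing the $KT$ feature vectors as a single stream fed one at a time into a design matrix. Since the algorithm is run with $\omega=1$, we start from $A_{1,0}=A_0=I_{d\times d}$, and the identity $A_{t,K}=A_t=A_{t+1,0}$ means that $A_{1,0},A_{1,1},\dots,A_{1,K},A_{2,1},\dots,A_{T,K}$ is one long chain of rank-one updates $A\mapsto A+xx^\top$, one for each $x_{(t,k)}$. It therefore suffices to bound $\sum_{m=1}^{KT}\lVert v_m\rVert_{B_{m-1}^{-1}}$, where $B_0=I_{d\times d}$, $B_m=B_{m-1}+v_mv_m^\top$, and $(v_1,\dots,v_{KT})$ is the concatenation of the vectors $x_{(t,k)}$ in period-then-index order. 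This is exactly the quantity controlled by Lemma~3 of \citet{pmlr-v15-chu11a} for the single-pull linear bandit run over $KT$ steps, so one option is simply to invoke that lemma; below I record the self-contained argument it is based on.

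First I would bound each term. By the matrix determinant lemma, $\det B_m=\det B_{m-1}\bigl(1+\lVert v_m\rVert_{B_{m-1}^{-1}}^2\bigr)$. Because $B_{m-1}\succeq B_0=I_{d\times d}$ we have $B_{m-1}^{-1}\preceq I_{d\times d}$, hence $\lVert v_m\rVert_{B_{m-1}^{-1}}^2\le\lVert v_m\rVert^2\le 1$ (using $\lVert x_i\rVert\le1$). On $[0,1]$ the elementary inequality $z\le 2\log(1+z)$ holds, so
\[
\lVert v_m\rVert_{B_{m-1}^{-1}}^2\le 2\log\!\bigl(1+\lVert v_m\rVert_{B_{m-1}^{-1}}^2\bigr)=2\bigl(\log\det B_m-\log\det B_{m-1}\bigr).
\]
Summing over $m=1,\dots,KT$ telescopes to $2\log\det B_{KT}$, since $\log\det B_0=0$. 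Next I would bound $\det B_{KT}$ via AM--GM on its eigenvalues: $\mathrm{tr}\,B_{KT}=d+\sum_{m=1}^{KT}\lVert v_m\rVert^2\le d+KT$, so $\det B_{KT}\le\bigl((d+KT)/d\bigr)^d$ and $\log\det B_{KT}\le d\log(1+KT/d)\le d\log(1+KT)$. Finally, Cauchy--Schwarz gives
\[
\sum_{m=1}^{KT}\lVert v_m\rVert_{B_{m-1}^{-1}}\le\sqrt{KT\sum_{m=1}^{KT}\lVert v_m\rVert_{B_{m-1}^{-1}}^2}\le\sqrt{2dKT\log(1+KT)},
\]
and a routine constant check ($\sqrt2\le5$ and $\log(1+KT)\le2\log(KT)$ for $KT\ge2$) yields the stated bound $5\sqrt{dKT\log(KT)}$.

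There is no serious obstacle here; the argument is routine once the setup is right. The only points that need care are (i) checking the chaining $A_{t,K}=A_{t+1,0}$, so that the double-indexed matrices genuinely assemble into a single stream of $KT$ rank-one updates and the determinant telescopes across period boundaries, and (ii) using $\omega=1$ in the two places where it matters --- to get $B_0=I_{d\times d}$ (hence $B_{m-1}^{-1}\preceq I_{d\times d}$ and $\log\det B_0=0$) and to keep the trace bound clean. If one prefers, items (i)--(ii) are precisely the content of the reduction to \citet{pmlr-v15-chu11a}, Lemma~3, and the lemma then follows by citation alone.
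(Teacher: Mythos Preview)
Your proposal is correct and matches the paper's approach: the paper does not give its own proof but simply notes that the bound follows from Lemma~3 of \citet{pmlr-v15-chu11a} once the $KT$ selections are viewed as a single stream of rank-one updates, which is exactly your reduction (i)--(ii). Your additional self-contained elliptical-potential argument is a standard and correct derivation of that cited lemma.
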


\subsubsection{New analysis for $\AdaptedOFUL$.} The next lemma is the key lemma of our analysis for $\AdaptedOFUL$. It upper-bounds the difference between two norms of the same vector $x$ weighted by two matrices, $A$ and $A+\sum_{k=1}^L u_k u_k^\top$. The additional sum of $L$ outer products corresponds to updating matrix $A$ using $L$ feature vectors selected by $\AdaptedOFUL$. We defer its proof to the appendix.

\begin{lemma}\label{lm:Xinshang_lemma}
Let $A \in \mathbb{R}^{d\times d}$ be any symmetric positive definite matrix, and $u_1, u_2,...,u_L \in \mathbb{R}^d$ be any vectors.
Let $\lambda_1, \lambda_2, \ldots, \lambda_d$ be the eigenvalues of $A$, and $\nu_1,\nu_2,\ldots, \nu_d$ be the eigenvalues of $A + \sum_{k=1}^L u_k u_k^\top$.
We have, for any $x \in \mathbb{R}^d$ such that $\|x\|_2 =1$,
\[
\sqrt{x^\top A^{-1} x} - \sqrt{x^\top (A +  \sum_{k=1}^L u_k u_k^\top)^{-1} x} \leq  \sum_{i=1}^d \frac{2}{\sqrt{\lambda_i}} - \sum_{i=1}^d \frac{2}{\sqrt{\nu_i}}.
\]
\end{lemma}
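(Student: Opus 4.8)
The plan is to turn the two weighted norms into ordinary Euclidean norms of linear images of $x$, reduce the left-hand side to the operator norm of $A^{-1/2}-B^{-1/2}$ where $B:=A+\sum_{k=1}^L u_k u_k^\top$, and then bound that operator norm by a trace, which is exactly the eigenvalue sum on the right. Concretely, write $A^{-1/2},B^{-1/2}$ for the symmetric positive definite square roots of $A^{-1},B^{-1}$. Since these are symmetric, $x^\top A^{-1}x=\lVert A^{-1/2}x\rVert_2^2$ and $x^\top B^{-1}x=\lVert B^{-1/2}x\rVert_2^2$, so the left-hand side equals $\lVert A^{-1/2}x\rVert_2-\lVert B^{-1/2}x\rVert_2$. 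First I would apply the reverse triangle inequality and then the definition of the operator norm, using $\lVert x\rVert_2=1$:
\[
\lVert A^{-1/2}x\rVert_2-\lVert B^{-1/2}x\rVert_2 \;\le\; \lVert (A^{-1/2}-B^{-1/2})x\rVert_2 \;\le\; \lVert A^{-1/2}-B^{-1/2}\rVert_{\mathrm{op}} .
\]
This isolates the whole problem as: bound $\lVert A^{-1/2}-B^{-1/2}\rVert_{\mathrm{op}}$ by $2\big(\sum_i \lambda_i^{-1/2}-\sum_i \nu_i^{-1/2}\big)$.

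The next step is to show $A^{-1/2}-B^{-1/2}\succeq 0$. Because $B=A+\sum_k u_k u_k^\top \succeq A\succ 0$, this follows from two standard facts about the Löwner order: the square root $M\mapsto M^{1/2}$ is operator monotone, so $A\preceq B$ gives $A^{1/2}\preceq B^{1/2}$; and $M\mapsto M^{-1}$ is operator antitone on positive definite matrices, so $A^{1/2}\preceq B^{1/2}$ gives $B^{-1/2}\preceq A^{-1/2}$. Once $A^{-1/2}-B^{-1/2}$ is positive semidefinite, its operator norm is its largest eigenvalue, which is at most the sum of all of its (nonnegative) eigenvalues, i.e. $\lVert M\rVert_{\mathrm{op}}\le \mathrm{tr}(M)$. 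Applying this with $M=A^{-1/2}-B^{-1/2}$ and noting that the eigenvalues of $A^{-1/2}$ (resp. $B^{-1/2}$) are precisely $\lambda_i^{-1/2}$ (resp. $\nu_i^{-1/2}$) gives
\[
\lVert A^{-1/2}-B^{-1/2}\rVert_{\mathrm{op}}\;\le\;\mathrm{tr}(A^{-1/2})-\mathrm{tr}(B^{-1/2})\;=\;\sum_{i=1}^d \frac{1}{\sqrt{\lambda_i}}-\sum_{i=1}^d \frac{1}{\sqrt{\nu_i}} .
\]
Since $B\succeq A$ forces this quantity to be nonnegative, it is in particular at most $2\big(\sum_i \lambda_i^{-1/2}-\sum_i \nu_i^{-1/2}\big)$, and chaining the two displays yields the claim. (The argument actually proves the bound with constant $1$, so the stated $2$ leaves slack.)

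The only non-elementary ingredient is the operator monotonicity of the square root, i.e. the positive semidefiniteness of $A^{-1/2}-B^{-1/2}$; I expect this to be the one piece requiring care, and I would either cite the Löwner–Heinz inequality or give the short self-contained proof from the integral representation $\sqrt{t}=\tfrac{1}{\pi}\int_0^\infty \tfrac{t}{t+s}\,s^{-1/2}\,ds$, using that each $t\mapsto t/(t+s)=1-s(t+s)^{-1}$ is operator monotone because $(t+s)^{-1}$ is operator antitone — and operator antitonicity of the inverse is itself elementary (deduce $B^{-1}\preceq A^{-1}$ from $0\prec A\preceq B$ via a congruence by $A^{-1/2}$ and the scalar-trivial fact $I\preceq N\Rightarrow N^{-1}\preceq I$). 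Everything else — the norm identities, the reverse triangle inequality, the operator-norm bound $\lVert Mx\rVert_2\le\lVert M\rVert_{\mathrm{op}}\lVert x\rVert_2$, and $\lambda_{\max}\le\mathrm{tr}$ for positive semidefinite matrices — is routine. If one preferred to avoid operator monotonicity altogether, an alternative is to telescope both sides over the $L$ rank-one updates $A^{(k-1)}\mapsto A^{(k)}=A^{(k-1)}+u_k u_k^\top$ (both the norm difference and the eigenvalue-sum difference telescope) and handle a single rank-one step using Weyl's interlacing inequalities; that route is plausibly where a looser constant such as the stated $2$ would naturally show up.
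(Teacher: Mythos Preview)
Your proof is correct and actually yields the sharper constant $1$ in place of the paper's $2$. Every step checks out: the identity $\sqrt{x^\top A^{-1}x}=\lVert A^{-1/2}x\rVert_2$, the reverse triangle inequality, operator antitonicity of $t\mapsto t^{-1/2}$ on positive definite matrices (via L\"owner--Heinz), and the bound $\lVert M\rVert_{\mathrm{op}}\le\mathrm{tr}(M)$ for $M\succeq 0$.

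Your route is \emph{genuinely different} and considerably shorter than the paper's. The paper first proves the $L=1$ rank-one case (its Lemma~\ref{lm:key1}) by a continuous interpolation $A(s)=A+s\,uu^\top$, differentiating $f(s)=\sqrt{x^\top A(s)^{-1}x}$ via the Sherman--Morrison formula, bounding $f'(s)$ through an auxiliary inequality on diagonal matrices (Lemma~\ref{lm:key0}), and then comparing to the derivative of $\sum_i (\lambda_i^{(s)})^{-1/2}$ using Auer's eigenvalue perturbation result (Lemma~\ref{lm:auerA}); finally it telescopes over the $L$ rank-one updates. That machinery occupies several pages and is where the factor $2$ appears. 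By contrast, your argument handles the full rank-$L$ update in one shot, needs no eigenvalue perturbation theory, and the only non-elementary input is the operator monotonicity of the square root, for which you correctly sketch a self-contained proof via the integral representation. What the paper's approach buys is independence from L\"owner--Heinz, at the price of a much longer argument and a looser constant; what your approach buys is brevity, a tighter bound, and transparency about why the inequality holds.
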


Our new regret bound for $\AdaptedOFUL$ is presented in the following theorem.
\begin{theorem}\label{thm:UCB1}
If $\AdaptedOFUL$ is run with
\[
\alpha=\sqrt{d\log{\left(\frac{1+TN}{\delta}\right)}}+1, \quad \omega = 1,
\]
then with probability at least $1-\delta$, the regret of the algorithm is
\[ \tilde{O}(Kd^{3/2} + d \sqrt{KT}).\]
\end{theorem}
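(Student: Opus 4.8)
The plan is to bound the regret by the sum of the per-product confidence widths $\alpha\|x_i\|_{A_{t-1}^{-1}}$, and then to split each width into a piece that telescopes ``across'' all $KT$ pulls (handled by Lemma~\ref{lm:chu_lem_3}, giving the variable cost) and a residual piece that arises because the design matrix is frozen at $A_{t-1}$ throughout period $t$ (handled by Lemma~\ref{lm:Xinshang_lemma}, giving the fixed cost). First I would work on the probability-$(1-\delta)$ event of Lemma~\ref{lm:qin_lem_4.1}. There $p_t(i)\ge\mu(i)$ for all $t,i$, and since $S_t$ maximizes $\sum_{i\in S}p_t(i)$ over all $K$-subsets, $\sum_{i\in S_t}p_t(i)\ge\sum_{i\in S^*}p_t(i)\ge\sum_{i\in S^*}\mu(i)$. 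Combining this with the upper bound $p_t(i)-\mu(i)\le 2\alpha\|x_i\|_{A_{t-1}^{-1}}$ from the same lemma gives
\[
R(T)\le\sum_{t=1}^T\sum_{i\in S_t}\bigl(p_t(i)-\mu(i)\bigr)\le 2\alpha\sum_{t=1}^T\sum_{k=1}^K\|x_{(t,k)}\|_{A_{t-1}^{-1}},
\]
where $x_{(t,1)},\dots,x_{(t,K)}$ is any enumeration of $\{x_i:i\in S_t\}$.

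Next I would write $\|x_{(t,k)}\|_{A_{t-1}^{-1}}=\|x_{(t,k)}\|_{A_{t,k-1}^{-1}}+\bigl(\|x_{(t,k)}\|_{A_{t-1}^{-1}}-\|x_{(t,k)}\|_{A_{t,k-1}^{-1}}\bigr)$. Summing the first term over all $t,k$ and applying Lemma~\ref{lm:chu_lem_3} bounds it by $5\sqrt{dKT\log(KT)}$, which after multiplication by $2\alpha$ is the variable-cost term $\tilde O(d\sqrt{KT})$. For the residual term, fix $t$ and $k$ and apply Lemma~\ref{lm:Xinshang_lemma} with $A=A_{t-1}$ and $\{u_1,\dots,u_{k-1}\}=\{x_{(t,1)},\dots,x_{(t,k-1)}\}$, so that $A+\sum_j u_ju_j^\top=A_{t,k-1}$. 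Since $\|x_i\|\le 1$ and the residual is nonnegative (because $A_{t,k-1}\succeq A_{t-1}$), the unit-norm statement of Lemma~\ref{lm:Xinshang_lemma} applies after rescaling $x_{(t,k)}$ to unit length. Writing $\Phi(A):=\sum_{i=1}^d 2/\sqrt{\lambda_i(A)}$, this yields $\|x_{(t,k)}\|_{A_{t-1}^{-1}}-\|x_{(t,k)}\|_{A_{t,k-1}^{-1}}\le\Phi(A_{t-1})-\Phi(A_{t,k-1})$.

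Now comes the telescoping of the fixed cost. The key observation is that $\Phi$ is nonincreasing along the chain $A_{t-1}=A_{t,0}\preceq A_{t,1}\preceq\cdots\preceq A_{t,K}=A_t$, because adding a PSD matrix only increases each sorted eigenvalue (Weyl's inequality). Hence $\Phi(A_{t-1})-\Phi(A_{t,k-1})\le\Phi(A_{t-1})-\Phi(A_t)$ for every $k\in[K]$, so the residual summed over $k$ within period $t$ is at most $K\bigl(\Phi(A_{t-1})-\Phi(A_t)\bigr)$, and summing over $t$ telescopes to $K\bigl(\Phi(A_0)-\Phi(A_T)\bigr)\le K\Phi(A_0)=2Kd$, using $\omega=1$ so that $A_0=I_{d\times d}$. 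Combining the two pieces,
\[
R(T)\le 2\alpha\bigl(5\sqrt{dKT\log(KT)}+2Kd\bigr),
\]
and substituting $\alpha=\sqrt{d\log((1+TN)/\delta)}+1=O\bigl(\sqrt{d\log((1+TN)/\delta)}\bigr)$ gives $R(T)=\tilde O(d\sqrt{KT}+Kd^{3/2})$, as claimed.

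I expect the main obstacle to be the correct setup in the second step: one must anchor the base matrix at $A_{t-1}$ rather than at $A_{t,k-1}$, so that all $K$ residual bounds within a period share the common value $\Phi(A_{t-1})$ and can be collapsed by the monotonicity of $\Phi$; pairing $A_{t,k-1}$ with $A_{t,k}$ instead would still telescope but would not produce a bound linear in $K$ with the right constant, and more importantly would not isolate the $T$-independent fixed cost. The rescaling to unit norm and the monotonicity of $\Phi$ are routine once stated, as is the reconciliation of the $\alpha$, $\omega$ choices across Lemmas~\ref{lm:qin_lem_4.1}, \ref{lm:chu_lem_3} and~\ref{lm:Xinshang_lemma}; everything else is bookkeeping.
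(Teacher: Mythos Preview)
Your proposal is correct and follows essentially the same approach as the paper's proof: both bound $R(T)$ by $2\alpha\sum_{t,k}\|x_{(t,k)}\|_{A_{t-1}^{-1}}$, split off the telescoping term $\|x_{(t,k)}\|_{A_{t,k-1}^{-1}}$ handled by Lemma~\ref{lm:chu_lem_3}, and control the residual via Lemma~\ref{lm:Xinshang_lemma} together with the monotonicity of $\Phi(A)=\sum_i 2/\sqrt{\lambda_i(A)}$ to obtain $K\bigl(\Phi(A_0)-\Phi(A_T)\bigr)\le 2Kd$. The only cosmetic difference is the order of operations on the residual: the paper first replaces $A_{t,k-1}^{-1}$ by $A_t^{-1}$ using monotonicity of the weighted norm and then invokes Lemma~\ref{lm:Xinshang_lemma} once with all $K$ rank-one updates, whereas you invoke Lemma~\ref{lm:Xinshang_lemma} with the first $k-1$ updates and then use monotonicity of $\Phi$; your explicit handling of the unit-norm hypothesis by rescaling is a detail the paper leaves implicit.
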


\proof{Proof.}
By Lemma~\ref{lm:qin_lem_4.1} and the property that $\AdaptedOFUL$ picks products with the largest UCB values $p(i)$, we have, with probability at least $1-\delta$,
\begin{align*}
R(T)
&=
\sum_{t=1}^T\sum_{i\in S^*}\mu(i)
-
\sum_{t=1}^T\sum_{i\in S_t}\mu(i)
\\
&\leq
\sum_{t=1}^T\sum_{i\in S^*}p_{t}(i)
-
\sum_{t=1}^T\sum_{i\in S_t}\mu(i)
\\
&\leq
\sum_{t=1}^T\sum_{i\in S_t}p_{t}(i)
-
\sum_{t=1}^T\sum_{i\in S_t}\mu(i)
\\
&\leq
\sum_{t=1}^T\sum_{i\in S_t}2\alpha\lVert x_{i}\rVert_{A^{-1}_{t-1}}
\\
&=
2\alpha\sum_{t=1}^T\sum_{i\in S_t}\lVert x_{i}\rVert_{A^{-1}_{t-1}}
.
\end{align*}

Recall that $x_{(t,1)},\ldots, x_{(t,K)}$ is a sequence of feature vectors of products in $S_t$.
Moreover, $A_{t,k}=A_{t-1}+\sum_{i=1}^kx_{(t,i)}x_{(t,i)}^\top$, for each period $t\in[T]$ and each $k\in[K]\cup\{0\}$.
Then, we can continue to obtain
\begin{align}
R(T) & \leq 2\alpha\sum_{t=1}^T\sum_{i\in S_t}\lVert x_{i}\rVert_{A^{-1}_{t-1}}\nonumber \\
&=
2\alpha\sum_{t=1}^T
\left(
\sum_{i\in S_t}\lVert x_{i}\rVert_{A^{-1}_{t-1}}
-\sum_{k=1}^K\lVert x_{(t,k)}\rVert_{A^{-1}_{t,k-1}}
+\sum_{k=1}^K\lVert x_{(t,k)}\rVert_{A^{-1}_{t,k-1}}
\right)\nonumber
\\
&=
2\alpha\sum_{t=1}^T
\left(
\sum_{k=1}^K\lVert x_{(t,k)}\rVert_{A^{-1}_{t-1}}
-\sum_{k=1}^K\lVert x_{(t,k)}\rVert_{A^{-1}_{t,k-1}}
\right)
+2\alpha\sum_{t=1}^T
\sum_{k=1}^K\lVert x_{(t,k)}\rVert_{A^{-1}_{t,k-1}}\nonumber
\\
&=
2\alpha\sum_{t=1}^T
\sum_{i=1}^K
\left(
\lVert x_{(t,k)}\rVert_{A^{-1}_{t-1}}
-
\lVert x_{(t,k)}\rVert_{A^{-1}_{t,k-1}}
\right)
+
2\alpha\sum_{t=1}^T
\sum_{k=1}^K\lVert x_{(t,k)}\rVert_{A^{-1}_{t,k-1}}
.\label{eq:proofUCB1a}
\end{align}

For the second term in \eqref{eq:proofUCB1a}, we have by Lemma~\ref{lm:chu_lem_3},
\begin{align}\label{eq:UCB1_part1}
\begin{split}
2\alpha
\sum_{t=1}^T
\sum_{k=1}^K\lVert x_{(t,k)}\rVert_{A^{-1}_{t,k-1}}
&\leq 2 \left(\sqrt{d\log{\left(\frac{1+TN}{\delta}\right)}}+1\right) \cdot 5\sqrt{dKT\log(KT)}\\
&=
10d\sqrt{KT\log\left(\frac{1+KT}{\delta}\right)\log(KT)}
+
10\sqrt{dKT\log(KT)}
.
\end{split}
\end{align}

Let $\lambda_{t,1}, \lambda_{t,2}, \ldots, \lambda_{t,d}$ be the eigenvalues of $A_t$ for all $t\in[T]\cup \{0\}$. For the first term in \eqref{eq:proofUCB1a}, by Lemma~\ref{lm:Xinshang_lemma} and the fact that $\lVert x_{(t,k)}\rVert_{A^{-1}_{t}}\leq\lVert x_{(t,k)}\rVert_{A^{-1}_{t,k-1}}$ for all $t\in[T]$ and $k\in[K]$, we obtain
\begin{align*}
\sum_{t=1}^T
\sum_{k=1}^K
\left(
\lVert x_{(t,k)}\rVert_{A^{-1}_{t-1}}
-
\lVert x_{(t,k)}\rVert_{A^{-1}_{t,k-1}}
\right)
& \leq
\sum_{t=1}^T
\sum_{i=1}^K
\left(
\lVert x_{(t,k)}\rVert_{A^{-1}_{t-1}}
-
\lVert x_{(t,k)}\rVert_{A^{-1}_{t}}
\right)\\
& \leq 
\sum_{t=1}^T
\sum_{i=1}^K
\left(
\sum_{j=1}^d \frac{2}{\sqrt{\lambda_{t-1,j}}}
-
\sum_{j=1}^d \frac{2}{\sqrt{\lambda_{t,j}}}
\right)
\\
&=
K\sum_{t=1}^T
\left(
\sum_{j=1}^d \frac{2}{\sqrt{\lambda_{t-1,j}}}
-
\sum_{j=1}^d \frac{2}{\sqrt{\lambda_{t,j}}}
\right)
\\
&=
K
\left(
\sum_{j=1}^d \frac{2}{\sqrt{\lambda_{0,j}}}
-
\sum_{j=1}^d \frac{2}{\sqrt{\lambda_{T,j}}}
\right)
\\
&\leq
K\sum_{j=1}^d \frac{2}{\sqrt{\lambda_{0,j}}}.
\end{align*}
Since $A_0=I_{d\times d}$, we have $\lambda_{0,j} = 1$ for all $j \in [d]$.
Hence, we have
\begin{align}\label{eq:UCB1_part2}
\begin{split}
& 2\alpha\sum_{t=1}^T
\sum_{i=1}^K
\left(
\lVert x_{(t,k)}\rVert_{A^{-1}_{t-1}}
-
\lVert x_{(t,k)}\rVert_{A^{-1}_{t,k-1}}
\right)\\
\leq & 2 \alpha \cdot K\sum_{j=1}^d \frac{2}{\sqrt{\lambda_{0,j}}}\\
= & 2 \alpha \cdot 2Kd\\
= & 2 \left(\sqrt{d\log{\left(\frac{1+TN}{\delta}\right)}}+1\right) \cdot 2Kd\\
= & 4d^{\frac{3}{2}}K\sqrt{\log\left(\frac{1+TN}{\delta}\right)} + 4Kd.
\end{split}
\end{align}
Combining (\ref{eq:UCB1_part1}) and (\ref{eq:UCB1_part2}), we complete the proof.
\halmos
\endproof

As shown by \citet{NIPS2011_4417}, $\UCBlin$ has a regret of $\tilde{O}(d \sqrt{T})$ in linear contextual bandit models, in which there is only one bandit observation in each period.
Hence, the variable cost of $\AdaptedOFUL$, $\tilde{O}(d \sqrt{KT})$, matches the same regret, since there are in total $KT$ bandit observations.

\subsection{Lower Bound on the Fixed Cost of $\AdaptedOFUL$}
\label{subsec:lower_bound}
In this section, we show a lower bound on the fixed cost of $\AdaptedOFUL$ by analyzing its regret in a simple example.

\begin{theorem}\label{thm:lb}
The regret of $\AdaptedOFUL$ is $\Omega(K \min(d,T))$.
\end{theorem}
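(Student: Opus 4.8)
The plan is to exhibit, for every $d\ge 2$ and every $T$, a single instance on which $\AdaptedOFUL$ (run with $\omega=1$ and any $\alpha>0$ --- the argument is in fact insensitive to the value of $\alpha$) spends its first $\min(d-1,T)$ periods picking only products of value $0$. Take $N=dK$ products of two kinds. The first $K$ \emph{good} products all carry the feature vector $\tfrac12 e_d$; the remaining $(d-1)K$ products form $d-1$ \emph{bad groups}, where group $j$ (for $1\le j\le d-1$) consists of $K$ products all carrying the feature $c_j e_j$, with $1=c_1>c_2>\dots>c_{d-1}>\tfrac12$ (for instance $c_j=1-(j-1)/(4d)$). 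Set $\theta^*=e_d$, so that $\mu(i)=\tfrac12$ on every good product and $\mu(i)=0$ on every bad product; all the model constraints ($\lVert\theta^*\rVert\le1$, $\lVert x_i\rVert\le1$, $\mu(i)\in[0,1]$) hold, bad products return reward $0$ almost surely, $S^*$ is the good set, and the optimal per-period reward is $K/2$. The two structural properties that make the instance work are: (i) $\theta^*$ is orthogonal to every bad direction $e_1,\dots,e_{d-1}$, so pulling bad products never shrinks the exploration bonus of the good products; and (ii) the bad directions are mutually orthogonal with strictly decreasing feature norms, so they get explored one at a time.

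I would then prove by induction on $t$ that, for every $t=1,\dots,\min(d-1,T)$, the set $S_t$ selected by $\AdaptedOFUL$ equals bad group $t$. The inductive step (which also covers $t=1$, with the sum below empty) is a direct computation: by the inductive hypothesis, periods $1,\dots,t-1$ each pulled one full bad group and observed only zeros, so $b_{t-1}=\vec{0}$, hence $\theta_t=A_{t-1}^{-1}b_{t-1}=\vec{0}$ and $A_{t-1}=I+K\sum_{j<t}c_j^2\,e_j e_j^\top$. Therefore $p_t(i)$ collapses to the exploration bonus $\alpha\lVert x_i\rVert_{A^{-1}_{t-1}}$, which equals $\alpha c_j/\sqrt{1+Kc_j^2}<\alpha/\sqrt{K}$ for any already-pulled group $j<t$, equals $\alpha c_j>\alpha/2$ for any not-yet-pulled group $j\ge t$, and equals exactly $\alpha/2$ for every good product (because $A_{t-1}$ has eigenvalue $1$ along $e_d$). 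For $K\ge 5$ we have $\alpha/\sqrt K<\alpha/2<\alpha c_j$ for all $j\ge t$, so the $K$ largest $p_t$-values are precisely the $K$ equal values $\alpha c_t$ attained inside group $t$; hence $S_t$ is exactly group $t$, it contains no good product, and the whole trajectory through period $\min(d-1,T)$ is deterministic.

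Granting the induction, each of the first $\min(d-1,T)$ periods incurs regret $\sum_{i\in S^*}\mu(i)-\sum_{i\in S_t}\mu(i)=K/2-0=K/2$, while all later periods contribute at least $0$ (per-period regret is nonnegative since $S^*$ is optimal); therefore $R(T)\ge\tfrac{K}{2}\min(d-1,T)=\Omega\!\left(K\min(d,T)\right)$ for $d\ge2$. (If one wants $N<dK$, run the same argument with $\min(\lfloor N/K\rfloor-1,\,d-1,\,T)$ bad groups; in the regime of interest $N\gg dK$.)

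The one genuinely delicate point, and the step I expect to need the most care, is calibrating the geometry so that the bonus term alone dictates every choice during the exploration phase: the good products' bonus must stay frozen at a constant while, in each period, a \emph{fresh} bad direction strictly outbids both the good products and every previously exhausted bad direction. This is exactly why the good products must live in the one direction ($e_d$) orthogonal to all the bad directions --- so only $d-1$, and not $d$, bad directions are available, which is the source of the $\min(d,T)$ rather than anything larger. (Heuristically, using $d$ orthogonal bad directions would force $\theta^*$ into their span, and then Lemma~\ref{lm:qin_lem_4.1} would force the good products' bonus to shrink once all bad directions are explored, so the algorithm would switch to the good set after only $O(d^2/K)$ periods.) Once the geometry is pinned down, the remaining checks --- the two bonus inequalities and the fact that the only ties occur \emph{within} group $t$ --- are routine.
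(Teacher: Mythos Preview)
Your construction is essentially the same as the paper's: both use $d$ mutually orthogonal feature directions, place the optimal group along the direction of smallest norm, and give the remaining $d-1$ ``bad'' groups distinct larger norms so that $\AdaptedOFUL$ (with $\theta_t=0$ throughout, since bad arms return zero reward) spends its first $\min(d-1,T)$ periods exploring one bad group at a time before ever touching the optimal set. The only differences are cosmetic---you index the good direction as $e_d$ with decreasing bad norms while the paper uses $e_1$ with increasing norms---and your write-up is somewhat more explicit (carrying out the induction in full and checking the already-pulled bonuses, at the cost of the harmless side condition $K\ge 5$).
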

\begin{proof}{Proof.}
Suppose there are $Kd$ products and they are split into $d$ groups, $(G_1,G_2,\cdots,G_d)$.
For each $i\in[d]$, $G_i$ has $K$ products with the same feature vector $\left(\frac{1}{2}+\frac{i}{2d}\right)e_i$, where $e_i$ denotes the unit vector with the $i$-th element being one.

Suppose $\theta^*=e_1$.
Then the optimal solution is to offer $G_1$ in all $T$ periods, and the expected reward in every period is $K\left(\frac{1}{2}+\frac{1}{2d}\right)$.

Initially, for each $i \in [d]$, the UCB value used by $\AdaptedOFUL$ for products in group $G_i$ is $\alpha \left(\frac{1}{2}+\frac{i}{2d}\right)$, which is increasing in $i \in [d]$. 

Since the feature vectors of products in different groups are mutually perpendicular, the UCB value of a product will not be affected by products selected from other groups. Therefore, the initial UCB value $\alpha \left(\frac{1}{2}+\frac{i}{2d}\right)$ for group $G_i$ will not change until one of the products in $G_i$ is picked by $\AdaptedOFUL$.

Given that $\AdaptedOFUL$ always picks products with the highest UCB values, and the initial UCB values increase in the group index $i \in [d]$, $\AdaptedOFUL$ will not select any product in $G_1$ in the first $d-1$ periods. 

Therefore, the total reward of $\AdaptedOFUL$ in the first $\min(T,d-1)$ periods must be zero. It follows that the regret of $\AdaptedOFUL$ is at least
\[ \min(T,d-1)  \cdot K\left(\frac{1}{2}+\frac{1}{2d}\right) = \Omega(K \min(T,d)).\]
\halmos

\end{proof}

We make two remarks regarding this lower bound result:
\begin{enumerate}
\item When the retailer selects $K$ products in the first period, she has no prior information for the estimation of $\theta^*$.
Thus, regardless of the value of $T$, the regret of any non-anticipating algorithm is at least $\Omega(K)$.
Compared to this, the lower bound $\Omega(K \min(T,d))$ for $\AdaptedOFUL$ has an extra factor of $\min(T,d)$. Therefore, in order to achieve a fixed cost close to $\Omega(K)$, we have to design a new algorithm, which is our goal in the next section.
\item When $T=d$, the lower bound on the regret for $\AdaptedOFUL$ is $\Omega(Kd)$, and there is only a $\tilde O(d^{1/2})$ gap compared to the fixed cost $\tilde O(Kd^{3/2})$ that we have proved in Theorem \ref{thm:UCB1}. This $\tilde O(d^{1/2})$ factor originates from the parameter $\alpha = \sqrt{d\log{\left(\frac{1+TN}{\delta}\right)}}+1$ that scales the lengths of confidence intervals.
\end{enumerate}

\section{Online Learning with Conservative Exploration}
\label{sec:UCB2}

In this section, we present a new algorithm $\UCBours$ for the semi-bandit model with linear generalization.
The new algorithm is based on a novel construction of exploration steps that are more \emph{conservative} than standard UCB procedures. 
Our analysis of $\UCBours$ shows its regret bound is $\tilde{O}(K\sqrt{d}+d\sqrt{KT})$, which improves on $\AdaptedOFUL$'s fixed cost by a factor of $d$.

\subsection{$\UCBours$ Algorithm for Semi-Bandits with Linear Generalization}

As illustrated in the lower bound proof in Section~\ref{subsec:lower_bound}, $\AdaptedOFUL$'s potential weakness lies in its tendency to select products with feature vectors that have the same or similar directions.
In other words, the set of selected products in each period is sometimes not diversified enough.
This is because, in any period $t$, the $K$ products in $S_t$ are selected independently based on the UCB values $p_t(i)$, which are calculated at the start of the period and sometimes form clusters for product groups with similar feature vector directions.

The new algorithm $\UCBours$ solves $\AdaptedOFUL$'s potential problem by offering more diversified product sets.
This is achieved through a sequential selection mechanism in each period, based on an adaptive product score $p_{t,k}(i)$, which shrinks per selection to make dissimilar products more likely to be chosen in subsequent selections.

More precisely, in $\UCBours$, the $K$ products in $S_t$ are selected sequentially, in a fashion that the $k$-th selection (for all $k\in[K]$) is based on scores $p_{t,k}(i)$ that are updated using the feature vectors of the $k-1$ previously selected products in the period.
If product $i$ is the $k$-th selection in period $t$, then the scores $p_{t,k+1}(j),\ldots,p_{t,K}(j)$ for all products $j$ that share similar feature vectors with product $i$ are decreased. This encourages more diversified product selections in each period.
Because of the way $p_{t,k}(i)$ is defined in $\UCBours$, it is always less than or equal to the standard UCB value $p_t(i)$.
This makes the new algorithm a variant of $\AdaptedOFUL$ with more conservative exploration steps -- hence the name $\UCBours$.

\
\\
$\UCBours$ algorithm for semi-bandits with linear generalization (with input parameter $\alpha$):
\begin{enumerate}
    \item Initialize $A_0= I_{d\times d}$ and $b_0=\vec{0}_{d}$.
    \item Repeat for $t=1,2,3,\ldots,T$
    \begin{itemize}
        \item[\ \ ] (a) Set $\theta_t= A_{t-1}^{-1}b_{t-1}$ and $A_{t,0}=A_{t-1}$.
        Initialize $S_t= \{\}$.
        \item[\ \ ] (b) Repeat for $k=1,2,3,\ldots,K$
        \begin{itemize}
            \item[\ \ \ \ ] i. Calculate $p_{t,k}(i)= x_{i}^\top\theta_t
          -\alpha\sqrt{x_{i}^\top A_{t-1}^{-1}x_{i}}
          +2\alpha\sqrt{x_{i}^\top A_{t,k-1}^{-1}x_{i}}$, for all $i\in[N]$.
            \item[\ \ \ \ ] ii. Add a product $j\in \argmax\limits_{i\in\cN\backslash S_t}\left\{p_{t,k}(i)\right\}$ to $S_t$.
            Update $A_{t,k}=A_{t,k-1}+x_{j}x_{j}^\top$.
        \end{itemize}
        \item[\ \ ] (c) Offer product set $S_t$.
        Observe outcomes $r_{t,i}\in\{0,1\}$ for $i\in S_t$.
        \item[\ \ ] (d) Update $A_t= A_{t-1}+\sum_{i\in S_t}x_{i}x_{i}^\top$ and $b_t= b_{t-1}+\sum_{i\in S_t}r_{t,i}x_{i}$.
    \end{itemize}
\end{enumerate}

\

$\UCBours$ shares a similar framework with $\AdaptedOFUL$, but it differs from $\AdaptedOFUL$ in Step~2(b), in which a different product score, $p_{t,k}(i)$, is maintained and the $K$ products in $S_t$ are chosen in a sequential manner.
We stress that the construction of product scores $p_{t,k}(i)$, i.e., Step 2(b)i., is novel.

\begin{figure}[h]
    \centering
    \includegraphics[width=0.95\textwidth]{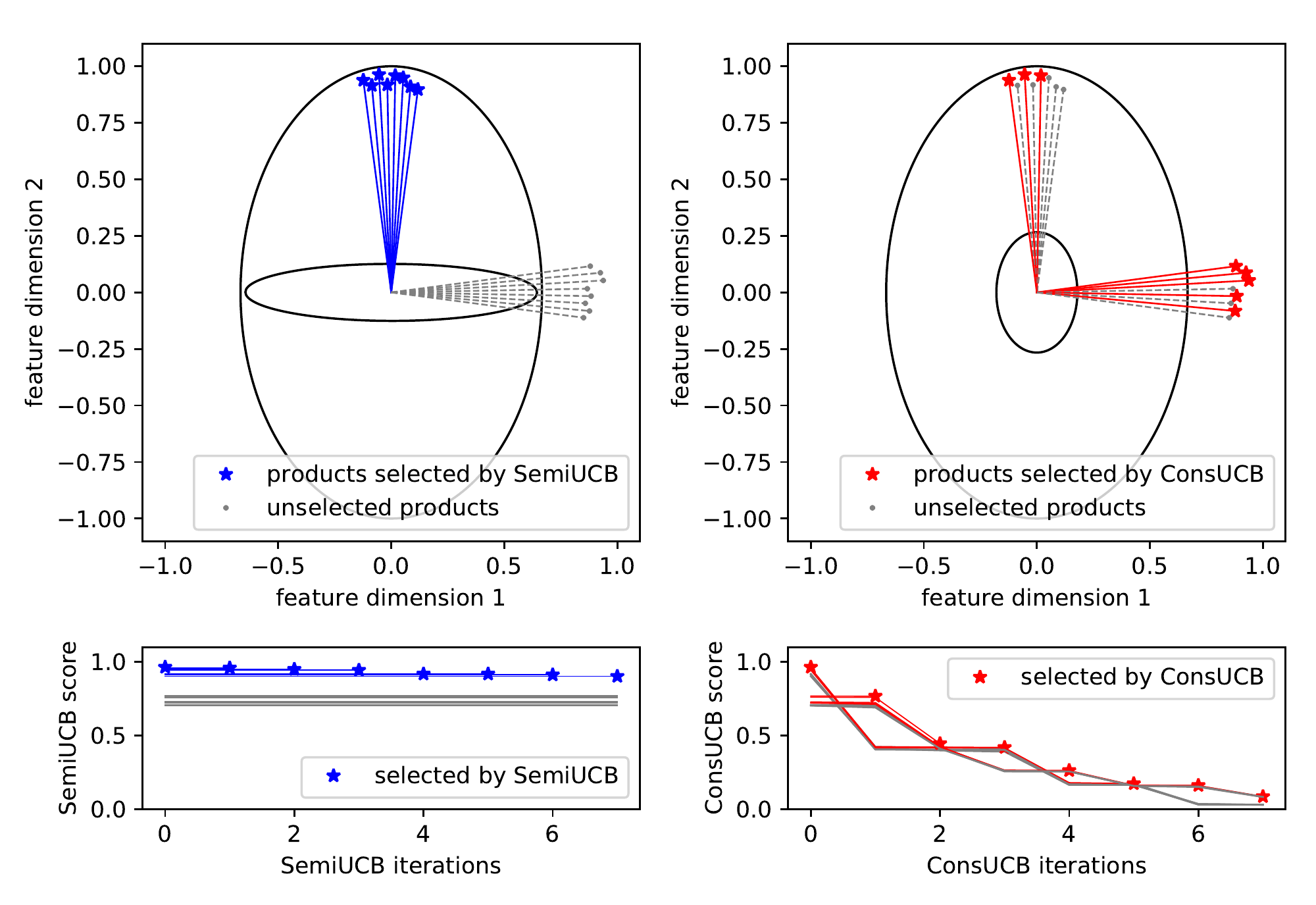}
    \caption{In the upper graphs, each dot represents a product in the two-dimensional feature space. We show that the products selected by $\UCBours$ are more diversified than those selected by $\AdaptedOFUL$ in a given time period.
    In the lower plots, we demonstrate that the scores of products used by $\UCBours$ shrink every time a product is selected, while the scores used by $\AdaptedOFUL$ are constant within each period.}
    \label{fig:compareUCB}
\end{figure}

Figure~\ref{fig:compareUCB} further illustrates the conservative exploration technique. In the figure, the sets of products selected by $\UCBours$ and $\AdaptedOFUL$ in a given time period are compared.
In this example, we have $d=2$, $K=8$, and $N=16$ products are clustered into two groups along the feature dimensions.
At the beginning of the period, suppose $A_{t-1} = \protect\begin{pmatrix} 1.5 & 0 \protect\\ 0 & 1\protect\end{pmatrix}$ and $\theta_t = \begin{bmatrix} 0 \\ 0\end{bmatrix}$ in both algorithms. This setting slightly favors exploration in feature dimension 2.
The upper graphs illustrate that $\AdaptedOFUL$ selects $K$ products all in one group, reducing the uncertainty related to $\theta^*$ almost only in one dimension (shown by the shrinkages from the larger ellipses to the smaller ones), whereas $\UCBours$ selects a more diversified product set, significantly reducing $\theta^*$ uncertainty in both dimensions.
The lower two graphs demonstrate that $\AdaptedOFUL$ calculates the product scores $p_t(\cdot)$ only once at the beginning and chooses the top $K$ products, while the scores $p_{t,k}(\cdot)$ in $\UCBours$ are updated every time a product is selected.

\subsection{Regret Bound Analysis for $\UCBours$}

In this section, we prove that the regret of $\UCBours$ is $\tilde{O}(K\sqrt{d}+d\sqrt{KT})$.

Proposition~\ref{pp:LCB} demonstrates the key benefit of using conservative exploration. It shows that, under $\UCBours$, the regret per product selection can be upper-bounded by the sum of a standard confidence interval term and the increase in the \emph{lower confidence bound} of a product in the optimal product set.
The rest of the regret analysis follows from Proposition~\ref{pp:LCB} and is completed in Theorem~\ref{thm:UCB2}.

For convenience, let $\Delta_{t,k}(i)$ denote $\sqrt{x_{i}^\top A_{t,k}^{-1}x_{i}}$, for all $i\in[N]$, $k\in[K]$ and $t\in[T+1]$.
For each product $i\in[N]$ and each period $t\in[T+1]$, define $\texttt{LCB}_t(i)=x_{i}^\top\theta_t-\alpha\Delta_{t,0}(i)$ and $\texttt{UCB}_t(i)=x_{i}^\top\theta_t+\alpha\Delta_{t,0}(i)$ as the lower and upper confidence bounds for $\mu(i)=x_{i}^\top\theta^*$, respectively.
Let $i_{t,k} \in [N]$ denote the $k$-th product selected in period $t$ by $\UCBours$.

\begin{proposition}
\label{pp:LCB}
If $\UCBours$ is run with $\alpha=\sqrt{d\ln{\left(\frac{1+N+TN}{\delta}\right)}}+1$, then with probability at least $1-\delta$, the following conditions hold for all $t\in[T]$ and $k\in[K]$:
\begin{itemize}
	\item[(1)] If $i_{t,k}\notin S^*$, then for all $i^*\in S^*\backslash S_t$,
	\[
	\mu(i^*)-\mu(i_{t,k})\leq\texttt{LCB}_{t+1}(i^*)-\texttt{LCB}_t(i^*)+2\alpha\Delta_{t,k-1}(i_{t,k}).
	\]
	\item[(2)] If $i_{t,k}\in S^*$, then
	\[
	0\leq\texttt{LCB}_{t+1}(i_{t,k})-\texttt{LCB}_{t}(i_{t,k})+2\alpha\Delta_{t,k-1}(i_{t,k}).
	\]
\end{itemize}
\end{proposition}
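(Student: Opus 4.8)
The plan is to prove both cases by tracking how the $\UCBours$ score $p_{t,k}(i)$ relates to the standard confidence bounds, and by exploiting the greedy selection rule at step $k$. Recall that by construction
\[
p_{t,k}(i) = x_i^\top \theta_t - \alpha \Delta_{t,0}(i) + 2\alpha \Delta_{t,k-1}(i) = \texttt{LCB}_t(i) + 2\alpha \Delta_{t,k-1}(i).
\]
First I would establish the basic ``validity'' facts analogous to Lemma~\ref{lm:qin_lem_4.1}: with the stated choice of $\alpha$, with probability at least $1-\delta$, one has $\texttt{LCB}_t(i) \le \mu(i) \le \texttt{UCB}_t(i)$ for all $t \in [T+1]$ and $i \in [N]$, and moreover $\texttt{UCB}_t(i) = \texttt{LCB}_t(i) + 2\alpha\Delta_{t,0}(i)$. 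Since $A_{t,k-1} \succeq A_{t,0} = A_{t-1}$, we have $\Delta_{t,k-1}(i) \le \Delta_{t,0}(i)$, hence $p_{t,k}(i) \le \texttt{UCB}_t(i)$, and combined with validity, $p_{t,k}(i) \le \texttt{UCB}_t(i)$ is an over-estimate only up to the standard amount; this is the ``conservative'' property. The union bound over the $(1+N+TN)$ relevant events is what dictates the form of $\alpha$.

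For case (1), suppose $i_{t,k} \notin S^*$ and pick any $i^* \in S^* \setminus S_t$. Since $i^*$ was available at selection $k$ but $i_{t,k}$ was chosen, the greedy rule gives $p_{t,k}(i_{t,k}) \ge p_{t,k}(i^*)$, i.e.
\[
\texttt{LCB}_t(i_{t,k}) + 2\alpha\Delta_{t,k-1}(i_{t,k}) \ge \texttt{LCB}_t(i^*) + 2\alpha\Delta_{t,k-1}(i^*) \ge \texttt{LCB}_t(i^*).
\]
Now bound $\mu(i^*) - \mu(i_{t,k})$: use $\mu(i^*) \le \texttt{UCB}_t(i^*)$? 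No --- the cleaner route is $\mu(i^*) \le \texttt{UCB}_{t}(i^*)$ is too lossy; instead note $\mu(i_{t,k}) \ge \texttt{LCB}_t(i_{t,k})$ and $\mu(i^*) \le \texttt{UCB}_{t+1}(i^*) = \texttt{LCB}_{t+1}(i^*) + 2\alpha\Delta_{t+1,0}(i^*)$. Hmm --- the target inequality has $\texttt{LCB}_{t+1}(i^*) - \texttt{LCB}_t(i^*)$, which suggests I should instead upper bound $\mu(i^*)$ by $\texttt{UCB}_{t+1}(i^*)$ and then argue $\texttt{UCB}_{t+1}(i^*) = \texttt{LCB}_{t+1}(i^*) + 2\alpha\Delta_{t+1,0}(i^*)$ together with the fact that $i^*$ was \emph{not} selected in period $t$, so $A_{t+1} = A_t \succeq A_{t,k-1}$, giving $\Delta_{t+1,0}(i^*) \le \Delta_{t,k-1}(i^*)$. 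Chaining:
\[
\mu(i^*) - \mu(i_{t,k}) \le \texttt{UCB}_{t+1}(i^*) - \texttt{LCB}_t(i_{t,k}) = \texttt{LCB}_{t+1}(i^*) + 2\alpha\Delta_{t+1,0}(i^*) - \texttt{LCB}_t(i_{t,k}),
\]
and then replace $-\texttt{LCB}_t(i_{t,k})$ using the greedy inequality above ($-\texttt{LCB}_t(i_{t,k}) \le -\texttt{LCB}_t(i^*) + 2\alpha\Delta_{t,k-1}(i_{t,k})$) and $\Delta_{t+1,0}(i^*) \le \Delta_{t,k-1}(i^*) \le \Delta_{t,k-1}(i_{t,k})$... but the last step needs $\Delta_{t,k-1}(i^*) \le \Delta_{t,k-1}(i_{t,k})$, which does \emph{not} follow in general. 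So instead I should keep the $\Delta_{t+1,0}(i^*)$ term and bound it by $\Delta_{t,k-1}(i^*)$; this still leaves a stray $2\alpha\Delta_{t,k-1}(i^*)$ term. The resolution: drop the crude $\mu(i^*) \le \texttt{UCB}$ bound and instead combine the greedy inequality with $\mu(i^*) \le \texttt{UCB}_t(i^*) = \texttt{LCB}_t(i^*) + 2\alpha\Delta_{t,0}(i^*)$ only after noting $\Delta_{t,0}(i^*) \le \Delta_{t,k-1}(i^*)$ is false too — so the genuinely correct manipulation is: $\mu(i^*) - \mu(i_{t,k}) \le \texttt{UCB}_{t+1}(i^*) - \texttt{LCB}_t(i_{t,k})$, write $\texttt{UCB}_{t+1}(i^*) = \texttt{LCB}_{t+1}(i^*) + 2\alpha\Delta_{t+1,0}(i^*)$, use $\Delta_{t+1,0}(i^*) \le \Delta_{t,k-1}(i^*)$ (valid since $i^*\notin S_t$ so $A_{t+1}=A_t\succeq A_{t,k-1}$), and then absorb: greedy gives $\texttt{LCB}_t(i^*) + 2\alpha\Delta_{t,k-1}(i^*) \le \texttt{LCB}_t(i_{t,k}) + 2\alpha\Delta_{t,k-1}(i_{t,k})$, i.e. $-\texttt{LCB}_t(i_{t,k}) + 2\alpha\Delta_{t,k-1}(i^*) \le -\texttt{LCB}_t(i^*) + 2\alpha\Delta_{t,k-1}(i_{t,k})$. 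Substituting yields exactly $\mu(i^*) - \mu(i_{t,k}) \le \texttt{LCB}_{t+1}(i^*) - \texttt{LCB}_t(i^*) + 2\alpha\Delta_{t,k-1}(i_{t,k})$, as desired.

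For case (2), $i_{t,k} \in S^*$: the right-hand side is nonnegative because $2\alpha\Delta_{t,k-1}(i_{t,k}) \ge 2\alpha\Delta_{t+1,0}(i_{t,k}) \ge 0$ --- wait, that needs $A_{t+1} \succeq A_{t,k-1}$, which holds since $A_{t+1}$ includes \emph{all} $K$ outer products of period $t$ while $A_{t,k-1}$ includes only $k-1$ of them; hence $\Delta_{t+1,0}(i_{t,k}) \le \Delta_{t,k-1}(i_{t,k})$. Then $\texttt{LCB}_{t+1}(i_{t,k}) - \texttt{LCB}_t(i_{t,k}) + 2\alpha\Delta_{t,k-1}(i_{t,k}) \ge \texttt{LCB}_{t+1}(i_{t,k}) - \texttt{LCB}_t(i_{t,k}) + 2\alpha\Delta_{t+1,0}(i_{t,k}) = \texttt{UCB}_{t+1}(i_{t,k}) - \texttt{LCB}_t(i_{t,k})$, and this is nonnegative since $\texttt{UCB}_{t+1}(i_{t,k}) \ge \mu(i_{t,k}) \ge \texttt{LCB}_t(i_{t,k})$ on the good event. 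The main obstacle is getting the telescoping-friendly form in case~(1) exactly right --- i.e., choosing which confidence bound to apply to $\mu(i^*)$ (it must be the period-$(t+1)$ UCB, not period $t$) and verifying the matrix-monotonicity facts $A_{t,k-1} \preceq A_{t+1}$ when $i^* \notin S_t$, so that the $\Delta$ terms line up to be absorbed by the greedy comparison rather than leaving an uncontrolled residual.
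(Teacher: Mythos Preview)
Your proof is correct and follows essentially the same route as the paper: bound $\mu(i^*)-\mu(i_{t,k})$ by $\texttt{UCB}_{t+1}(i^*) - \texttt{LCB}_t(i_{t,k})$, rewrite both sides in terms of $\texttt{LCB}$ and $\Delta$, apply the greedy inequality $p_{t,k}(i_{t,k}) \ge p_{t,k}(i^*)$, and close using $\Delta_{t+1,0}(i^*) \le \Delta_{t,k-1}(i^*)$; case (2) is handled identically in both. One small note: your justification ``since $i^* \notin S_t$'' for the monotonicity $A_{t+1,0} = A_t \succeq A_{t,k-1}$ is unnecessary --- this holds simply because $A_{t+1,0} = A_t = A_{t,K}$ and $k-1 \le K$, independent of which products lie in $S_t$.
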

\proof{Proof.}
By Lemma~\ref{lm:qin_lem_4.1}, with probability at least $1-\delta$, we have
\begin{equation}
\label{eq:LCBUCB}
\texttt{LCB}_{t}(i)= x_{i}^\top\theta_t-\alpha\Delta_{t,0}(i)\leq \mu(i) \leq x_{i}^\top\theta_t + \alpha\Delta_{t,0}(i)=\texttt{UCB}_{t}(i),
\end{equation}
for all $i\in[N]$ and  $t\in[T+1]$.

Consider any product $i_{t,k}$, which is selected in the $k$-th step in period $t$ by $\UCBours$.
Conditioned on (\ref{eq:LCBUCB}), we want to show, for all $i_{t,k}\in S_t$,
\begin{enumerate}
	\item If $i_{t,k}\notin S^*$, then for all $i^*\in S^*\backslash S_t$,
	\begin{equation}
	\label{eq:notinoptimalset}
	\mu(i^*)-\mu(i_{t,k})\leq\texttt{LCB}_{t+1}(i^*)-\texttt{LCB}_{t}(i^*)+2\alpha\Delta_{t,k-1}(i_{t,k}).
	\end{equation}
	\item If $i_{t,k}\in S^*$, then
	\begin{equation}
	\label{eq:inoptimalset}
	0\leq\texttt{LCB}_{t+1}(i_{t,k})-\texttt{LCB}_{t}(i_{t,k})+2\alpha\Delta_{t,k-1}(i_{t,k}).
	\end{equation}
\end{enumerate}

Consider the first case where $i_{t,k}\notin S^*$. We have
\begin{align*}
\mu(i^*)-\mu(i_{t,k})
&\overset\da\leq 
\texttt{UCB}_{t+1}(i^*)-\texttt{LCB}_{t}(i_{t,k})
\\
&\overset\db=\texttt{UCB}_{t+1}(i^*)-p_{t,k}(i_{t,k})+2\alpha\Delta_{t,k-1}(i_{t,k})
\\
&\overset\dc\leq\texttt{UCB}_{t+1}(i^*)-p_{t,k}(i^*)+2\alpha\Delta_{t,k-1}(i_{t,k})
\\
&\overset\dd=\left(\texttt{LCB}_{t+1}(i^*)+2\alpha\Delta_{t+1,0}(i^*)\right)-\left(\texttt{LCB}_{t}(i^*)+2\alpha\Delta_{t,k-1}(i^*)\right)+2\alpha\Delta_{t,k-1}(i_{t,k})
\\
&\overset\de\leq\texttt{LCB}_{t+1}(i^*)-\texttt{LCB}_{t}(i^*)+2\alpha\Delta_{t,k-1}(i_{t,k}).
\end{align*}
Above, inequality $\da$ follows from condition \eqref{eq:LCBUCB};
equalities $\db$ and $\dd$ are by definition of $p_{t,k}(i)$, $\texttt{UCB}_{t}(i)$ and $\texttt{LCB}_{t}(i)$; inequality $\dc$ is because product $i_{t,k}$ is selected, instead of $i^*$, by $\UCBours$ in the $k$-th step in period $t$; inequality $\de$ is because
\[
\Delta_{t+1,0}(i)=\sqrt{x_{}^\top A_{t}^{-1}x_{i}}=\sqrt{x_{}^\top A_{t,K}^{-1}x_{i}}
\leq
\sqrt{x_{i}^\top A_{t,k-1}^{-1}x_{i}}
=
\Delta_{t,k-1}(i)
\quad\mbox{for all }i\in[N].
\]

Now consider the second case where $i_{t,k}\in S^*$.

Given condition \eqref{eq:LCBUCB} and $\Delta_{t+1,0}(i_{t,k})\leq\Delta_{t,k-1}(i_{t,k})$, we have
\[
\begin{aligned}
0\leq\texttt{UCB}_{t+1}(i_{t,k})-\texttt{LCB}_{t}(i_{t,k})&=\texttt{LCB}_{t+1}(i_{t,k})-\texttt{LCB}_{t}(i_{t,k})+2\alpha\Delta_{t+1,0}(i_{t,k})
\\
&\leq\texttt{LCB}_{t+1}(i_{t,k})-\texttt{LCB}_{t}(i_{t,k})+2\alpha\Delta_{t,k-1}(i_{t,k}).
\end{aligned}
\]

Since (\ref{eq:LCBUCB}) holds with probability at least $1-\delta$, the probability that either (\ref{eq:notinoptimalset}) or (\ref{eq:inoptimalset}) holds is also at least $1-\delta$.
\halmos
\endproof

Now we complete the regret analysis for $\UCBours$, using the results from Proposition~\ref{pp:LCB}.
\begin{theorem}\label{thm:UCB2}
If $\UCBours$ is run with $\alpha=\sqrt{d\ln{\left(\frac{1+N+TN}{\delta}\right)}}+1$, then with probability at least $1-\delta$, the regret of the algorithm is $\tilde O(K\sqrt{d} + d \sqrt{KT})$.
\end{theorem}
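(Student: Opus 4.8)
The plan is to turn the per-selection inequalities of Proposition~\ref{pp:LCB} into a per-period regret bound by matching each product selected outside $S^*$ with a distinct unselected optimal product, then sum over periods so that the lower-confidence-bound increments telescope and the residual confidence widths are handled by Lemma~\ref{lm:chu_lem_3}.

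First I would condition on the event of Proposition~\ref{pp:LCB}, which holds with probability $1-\delta$, so that all subsequent statements are deterministic. Fix a period $t$ and set $B_t=\{k\in[K]:i_{t,k}\notin S^*\}$, so $\{i_{t,k}:k\in B_t\}=S_t\setminus S^*$ and $|B_t|=|S^*\setminus S_t|$; choose an arbitrary bijection $\sigma_t:B_t\to S^*\setminus S_t$. Cancelling the common contribution of $S^*\cap S_t$, the period-$t$ regret equals $\sum_{k\in B_t}\bigl(\mu(\sigma_t(k))-\mu(i_{t,k})\bigr)$. Applying part~(1) of Proposition~\ref{pp:LCB} with $i^*=\sigma_t(k)$ to each $k\in B_t$, and then adding the nonnegative quantities from part~(2) for each $k\notin B_t$, I obtain
\[
\sum_{i\in S^*}\mu(i)-\sum_{i\in S_t}\mu(i)\ \le\ \sum_{j\in S^*}\bigl(\texttt{LCB}_{t+1}(j)-\texttt{LCB}_t(j)\bigr)+2\alpha\sum_{k=1}^K\Delta_{t,k-1}(i_{t,k}),
\]
where the key bookkeeping observation is that $\{\sigma_t(k):k\in B_t\}\cup\{i_{t,k}:k\notin B_t\}$ is exactly $S^*$ (a disjoint union of $S^*\setminus S_t$ and $S^*\cap S_t$), so the $\texttt{LCB}$-increments reassemble into a clean sum indexed by $j\in S^*$.

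Next I would sum this over $t=1,\dots,T$. The first term telescopes in $t$ for each fixed $j$, giving $\sum_{j\in S^*}\bigl(\texttt{LCB}_{T+1}(j)-\texttt{LCB}_1(j)\bigr)$; since $\texttt{LCB}_{T+1}(j)\le\mu(j)\le 1$ by \eqref{eq:LCBUCB}, and $\texttt{LCB}_1(j)=x_j^\top\theta_1-\alpha\Delta_{1,0}(j)=-\alpha\lVert x_j\rVert\ge-\alpha$ because $A_0=I$ and $\theta_1=0$, this is at most $K(1+\alpha)=\tilde O(K\sqrt d)$. For the second term I would invoke Lemma~\ref{lm:chu_lem_3}: the matrices $A_{t,k}$ in $\UCBours$ are formed by exactly the same sequence of rank-one updates fed into $A$ as in the $\AdaptedOFUL$ analysis, so the lemma applies verbatim with $x_{(t,k)}=x_{i_{t,k}}$ and bounds $\sum_{t,k}\Delta_{t,k-1}(i_{t,k})$ by $5\sqrt{dKT\log(KT)}$; multiplying by $2\alpha=\tilde O(\sqrt d)$ gives $\tilde O(d\sqrt{KT})$. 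Adding the two contributions yields the claimed $\tilde O(K\sqrt d+d\sqrt{KT})$ regret.

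The conceptual heart of the argument — replacing the uncontrolled gap $\mu(i^*)-\mu(i_{t,k})$ for a non-optimal selection by a telescoping $\texttt{LCB}$ increment plus a single confidence width — is already isolated in Proposition~\ref{pp:LCB}, so I expect the main obstacle here to be getting the matching exactly right: verifying that $\sigma_t$ is well defined, that the reassembled index set is precisely $S^*$ (so that the cross-period telescoping is exact rather than merely an inequality), and that the residual sum $\sum_{t,k}\Delta_{t,k-1}(i_{t,k})$ genuinely falls under the hypotheses of Lemma~\ref{lm:chu_lem_3} despite that lemma being stated for $\AdaptedOFUL$. Bounding $\texttt{LCB}_1$ and $\texttt{LCB}_{T+1}$ via the initialization and \eqref{eq:LCBUCB} is routine once the telescoping is set up.
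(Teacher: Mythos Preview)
Your proposal is correct and follows essentially the same approach as the paper: both arguments cancel $S^*\cap S_t$, match each selected non-optimal product with a distinct unselected optimal one, apply the two cases of Proposition~\ref{pp:LCB} so that the $\texttt{LCB}$-increments reassemble into a sum over $S^*$, telescope in $t$, and then bound the endpoints via \eqref{eq:LCBUCB} and the initialization while handling the residual widths with Lemma~\ref{lm:chu_lem_3}. Your explicit check that Lemma~\ref{lm:chu_lem_3} applies to $\UCBours$ (because the $A_{t,k}$ are the same rank-one update sequence starting from $I$) is a point the paper glosses over, but otherwise the two proofs are the same up to notation.
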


\begin{proof}{Proof.}
Notice that the regret in period $t$ can written as
\begin{align*}
\sum_{i\in S^*}\mu(i)
-
\sum_{i\in S_t}\mu(i)
&
=
\left(
\sum_{i\in S^*\cap S_t}\mu(i)
+
\sum_{i\in S^*\backslash S_t}\mu(i)
\right)
-
\left(
\sum_{i\in S_t\cap S^*}\mu(i)
+
\sum_{i\in S_t\backslash S^*}\mu(i)
\right)
\\
&=
\sum_{i\in S^*\backslash S_t}\mu(i)
-
\sum_{i\in S_t\backslash S^*}\mu(i).
\end{align*}

Since $S_t\backslash S^*$ and $S^*\backslash S_t$ have the same number of products, let $f_t:S_t\backslash S^*\to S^*\backslash S_t$ be an arbitrary one-to-one function that maps from $S_t\backslash S^*$ to $S^*\backslash S_t$.
As a result, summing over $S^*\backslash S_t$ is the same as summing over $\left\{f_t(i):i \in S_t\backslash S^*\right\}$.
Hence, the $T$-period total expected regret can be written as
\begin{align*}
R(T)&=\sum_{t=1}^T\left(\sum_{i\in S^*\backslash S_t}\mu(i)
-
\sum_{i\in S_t\backslash S^*}\mu(i)\right)\\
&=\sum_{t=1}^T\sum_{i \in S_t\backslash S^*}\left(\mu(f_t(i))-\mu(i)\right)\\
& = \sum_{t=1}^T \sum_{k=1}^K \bI_{\{i_{t,k} \not\in S^*\}} \left(\mu(f_t(i_{t,k}))-\mu(i_{t,k})\right),
\end{align*}
where $\bI_{\{\cdot\}}$ is the indicator function.

Then, we use Proposition~\ref{pp:LCB} to obtain
\begin{align*}
R(T)&= \sum_{t=1}^T \sum_{k=1}^K \bI_{\{i_{t,k} \not\in S^*\}} \left(\mu(f_t(i_{t,k}))-\mu(i_{t,k})\right)\\
& = \sum_{t=1}^T \sum_{k=1}^K \left[ \bI_{\{i_{t,k} \not\in S^*\}}\cdot \left(\mu(f_t(i_{t,k}))-\mu(i_{t,k})\right)+ \bI_{\{i_{t,k} \in S^*\}} \cdot 0 \right]\\
&\leq \sum_{t=1}^T \sum_{k=1}^K \left[ \bI_{\{i_{t,k} \not\in S^*\}}\cdot \left(\texttt{LCB}_{t+1}(f_t(i_{t,k}))-\texttt{LCB}_{t}(f_t(i_{t,k}))+2\alpha\Delta_{t,k-1}(i_{t,k}) \right) \right. \\
 & \quad \quad \quad \quad + \left. \bI_{\{i_{t,k} \in S^*\}} \cdot  \left( \texttt{LCB}_{t+1}(i_{t,k})-\texttt{LCB}_{t}(i_{t,k})+2\alpha\Delta_{t,k-1}(i_{t,k}) \right) \right]\\
&= \sum_{t=1}^T \left[ \sum_{i \in S^*} \left( \texttt{LCB}_{t+1}(i)-\texttt{LCB}_{t}(i) \right) + \sum_{k=1}^K 2\alpha\Delta_{t,k-1}(i_{t,k}) \right]\\
& =  \sum_{i \in S^*} \left( \texttt{LCB}_{T+1}(i)-\texttt{LCB}_{1}(i) \right) + \sum_{t=1}^T \sum_{k=1}^K 2\alpha\Delta_{t,k-1}(i_{t,k}).
\end{align*}

Conditioned on \eqref{eq:LCBUCB}, we have $\texttt{LCB}_{T+1}(i) \leq \mu(i)$ for all $i \in [N]$. Since we assume $\mu(i) \leq 1$ for all products $i \in [N]$, we have $\texttt{LCB}_{T+1}(i) \leq 1$ for all $i \in S^*$. Thus, 
\[ \sum_{i \in S^*} \texttt{LCB}_{T+1}(i) \leq |S^*| = K.\]

Moreover, by definition of $\texttt{LCB}_{1}(i)$, we have
\[ \texttt{LCB}_{1}(i) = - \alpha \sqrt{x_i^\top A_0^{-1} x_i} = -\alpha \sqrt{x_i^\top I_{d\times d} x_i} \geq -\alpha.\]
Thus,
\[ \sum_{i \in S^*} -\texttt{LCB}_{1}(i) \leq \alpha |S^*| = \alpha K. \]

Finally, by Lemma~\ref{lm:chu_lem_3}, we have
\[
2\alpha\sum_{t=1}^T\sum_{k=1}^K\Delta_{t,k-1}(i_{t,k})\leq10\alpha\sqrt{dKT\log(KT)}.
\]

Altogether, the total regret of $\UCBours$ can be bounded by
\begin{align*}
R(T)& \leq \sum_{i \in S^*} \left( \texttt{LCB}_{T+1}(i)-\texttt{LCB}_{1}(i) \right) + \sum_{t=1}^T \sum_{k=1}^K 2\alpha\Delta_{t,k-1}(i_{t,k})\\
& \leq K + \alpha K + 10\alpha\sqrt{dKT\log(KT)}\\
& = K + \left(\sqrt{d\ln{\left(\frac{1+N+TN}{\delta}\right)}}+1\right) \left(K + 10 \sqrt{dKT\log(KT)}\right)\\
& = \tilde O(K \sqrt{d} + d\sqrt{KT}).
\end{align*}
\halmos

\end{proof}

\section{Numerical Experiments}
\label{sec:numerical}

In this section, we test the algorithms $\AdaptedOFUL$ and $\UCBours$ using industrial data from Alibaba Group.

%

We select around $N \approx 20,000$ different products in the toys category that were sold on Alibaba Tmall, the largest business-to-consumer online retail platform in China, during the month of April 2018.
Since our model is only concerned with products with medium to low demands, the set of $N \approx 20,000$ products excludes any products that on average sold more than 20 units per day in China during that month.

The feature vectors of the products are generated by Alibaba's deep learning team using representation learning \citep{Bengio2013}, based on both product's intrinsic features as well as historical product-related user activities.
Thus, each feature vector is essentially compressed from high-dimensional sparse data to a $d = 50$ dimensional vector.
As we have shown in Section~\ref{sec:intro}, there is a near-linear relationship between the products' features and their probabilities of being purchased.

For an arbitrarily chosen city in China, we estimate the parameter $\theta^*$ of a linear predictor based on the city's local sales data in April 2018.
Then, assuming $\theta^*$  is unknown at the beginning, we test the performances of $\AdaptedOFUL$ and $\UCBours$ on the product selection problem, by computing their cumulative regrets, the algorithms' compromises in performance compared to the case where $\theta^*$ is known from the beginning.

\begin{figure}[h]
    \centering
    \includegraphics[width=0.53\textwidth]{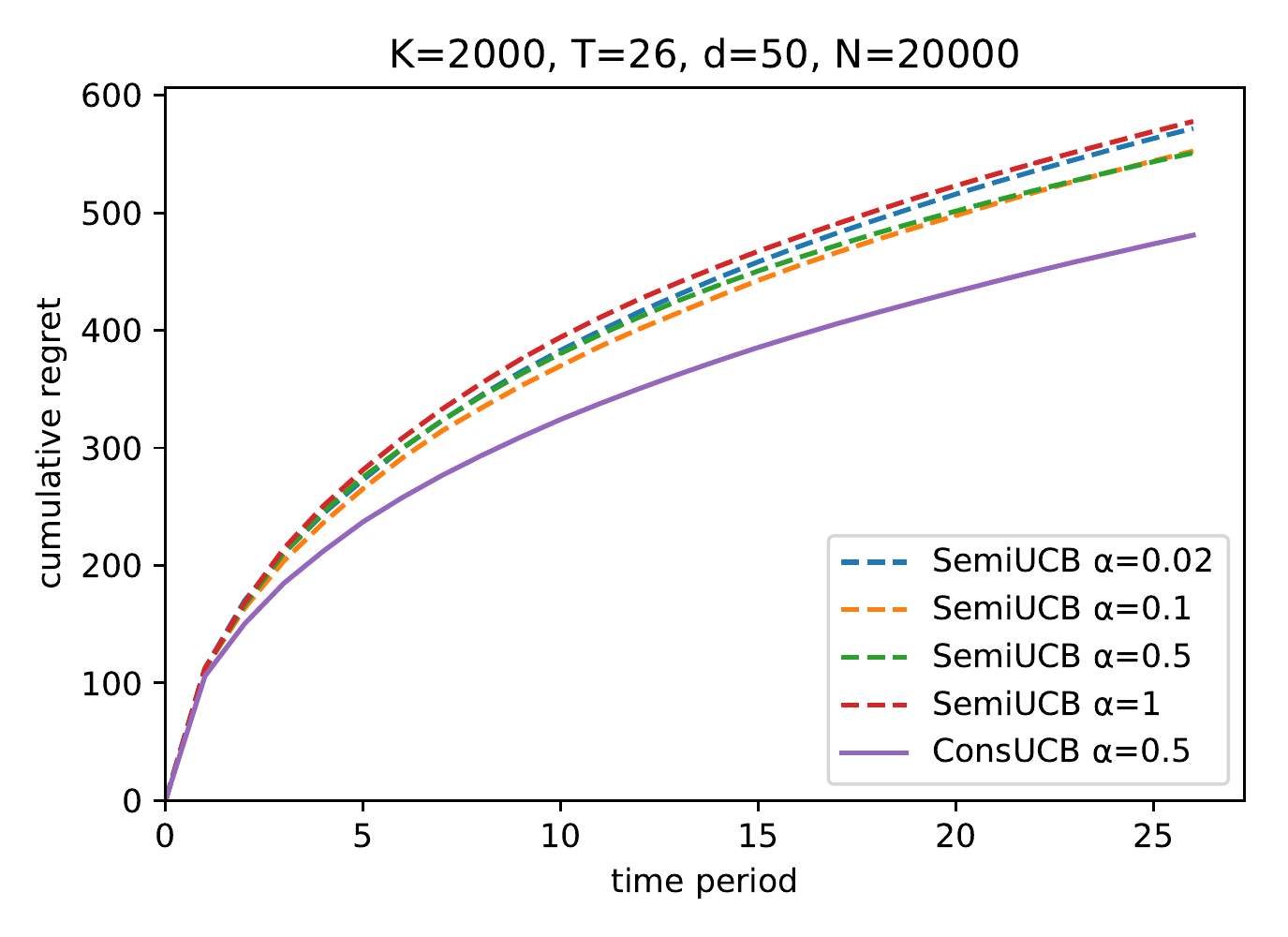}
    \caption{Cumulative regret of $\AdaptedOFUL$ (with $\omega=1$) and $\UCBours$ in the first $T=26$ periods, when $K=2000$.}
    \label{fig:regretTrend1}
\end{figure}

Figures \ref{fig:regretTrend1} and \ref{fig:regretTrend23} summarize our simulation results, for $K=2000$, $K=1000$ and $K=200$ respectively.
For each $K\in\{2000,1000,200\}$, we run both $\AdaptedOFUL$ and $\UCBours$ for 26 time periods, and in each period $t\in\{1,\ldots,26\}$, we calculate the algorithms' per-period regrets, $\texttt{regret}_t(\AdaptedOFUL)$ and $\texttt{regret}_t(\UCBours)$, and then compute the algorithms' cumulative regrets, $\sum_{s=1}^{t}\texttt{regret}_s(\AdaptedOFUL)$ and $\sum_{s=1}^{t}\texttt{regret}_s(\UCBours)$.
Averaged over ten simulation replicates, the two algorithms' cumulative regrets are plotted in Figures~\ref{fig:regretTrend1} and \ref{fig:regretTrend23}. In Table~\ref{table:regrets}, we list $\UCBours$'s improvements in cumulative regret over $\AdaptedOFUL$ at the end of the 26-period time horizon.



\begin{figure}[h]
    \centering
\begin{subfigure}{.5\textwidth}
  \centering
  \includegraphics[width=0.98\linewidth]{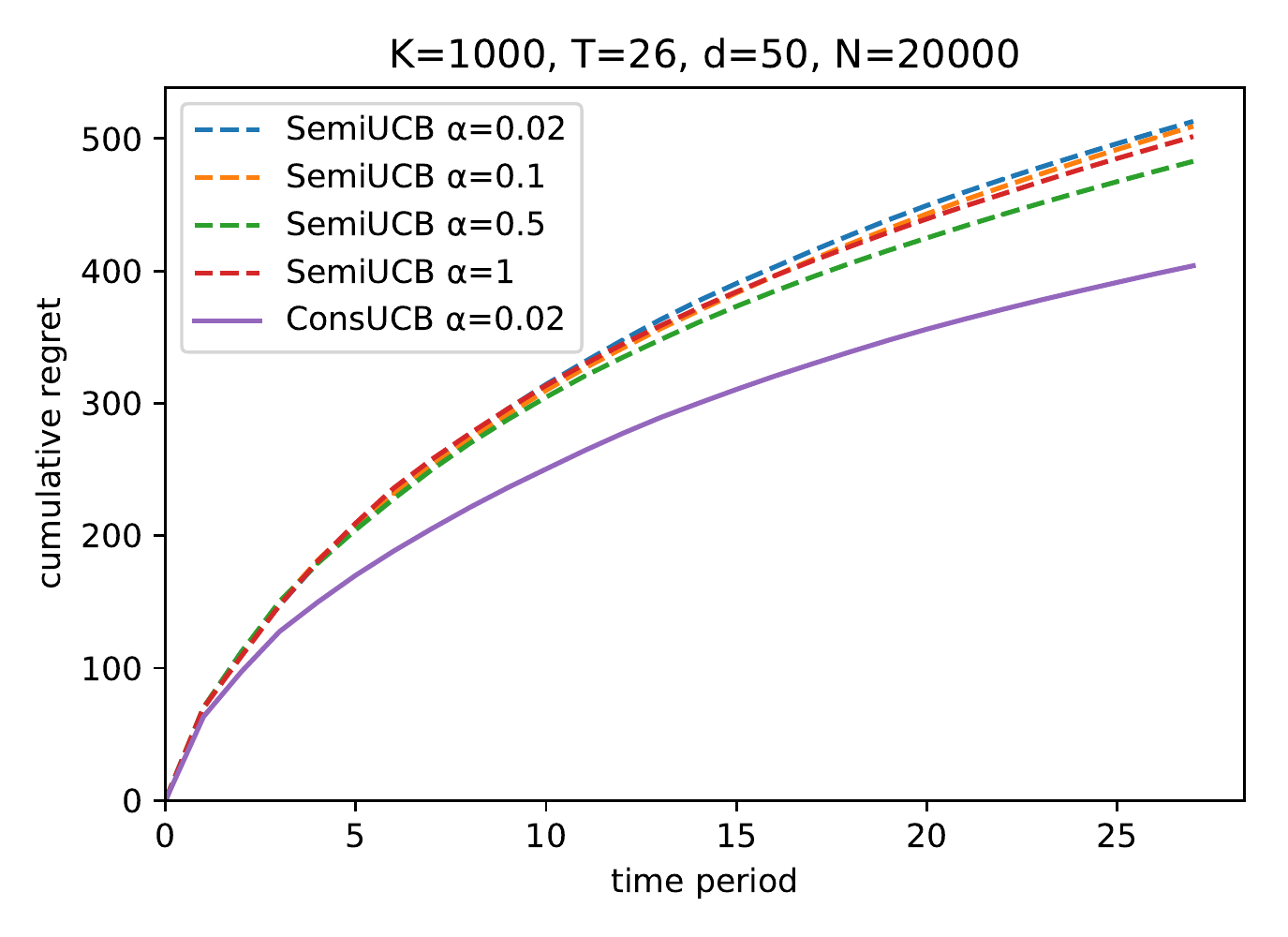}
  \label{fig:sub1}
\end{subfigure}%
\begin{subfigure}{.5\textwidth}
  \centering
  \includegraphics[width=0.98\linewidth]{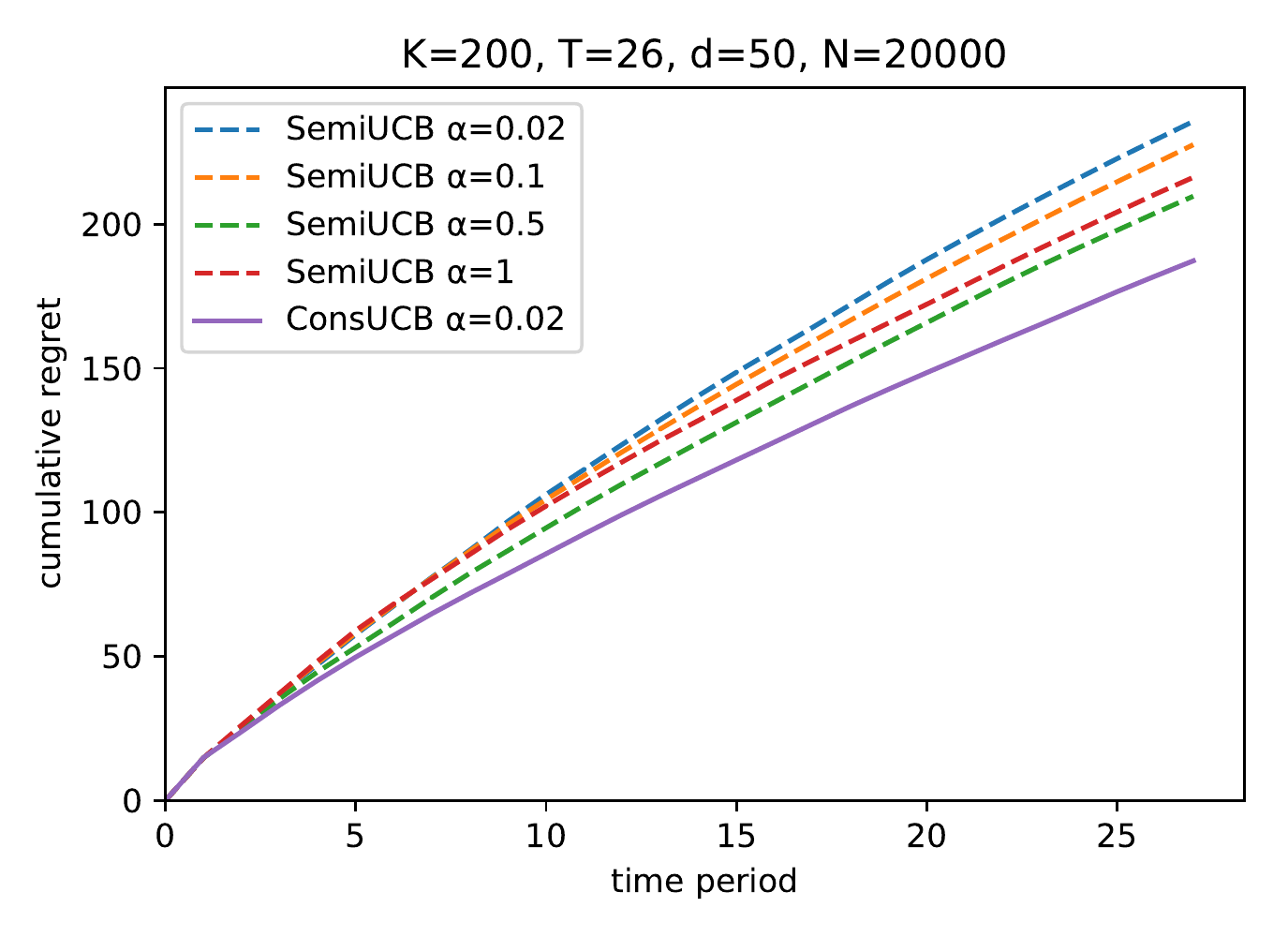}
  \label{fig:sub2}
\end{subfigure}
    \caption{Cumulative regret of $\AdaptedOFUL$ (with $\omega=1$) and $\UCBours$ in the first $T=26$ periods, when $K=1000$ and $K=200$.}
    \label{fig:regretTrend23}
\end{figure}

It is clear that $\UCBours$'s cumulative regret is consistently lower than that of $\AdaptedOFUL$ over a set of $\alpha$ values, for all values of $K$ that we include in the test.
Moreover, from all plots, we can see that, compared to $\AdaptedOFUL$, $\UCBours$'s most reductions in regret happen in the first ten periods, and their differences stabilize starting around the \nth{15} period.
This is mainly because $\UCBours$ is specially designed to mitigate the fixed cost part of the regret, which is introduced right at the beginning, independent of $T$.
While $T$ increases as the algorithms progress, the estimation of $\theta^*$ in both $\AdaptedOFUL$ and $\UCBours$ becomes more accurate, and the variable cost part of the regret takes over.
Since both algorithms have the same order of regret for the variable cost, their differences eventually become stable.

\begin{table}[h!]
\centering
\begin{tabular}{|c|c|c|c|c|c|}
\hline
 & \multicolumn{2}{c|}{$\AdaptedOFUL$} & \multicolumn{2}{c|}{$\UCBours$} & Regret improvement \\
\hline
 \multirow{4}{*}{$K=2000$} & $\alpha=0.02$ & 571.80 & \multirow{4}{*}{$\alpha=0.50$} & \multirow{4}{*}{480.96} & 15.89\%  \\
 & $\alpha=0.10$ & 552.56 & & & 12.96\%\\
 & $\alpha=0.50$ & 551.01 & & & \textbf{12.71\%}\\
 & $\alpha=1.00$ & 577.77 & & & 16.76\%\\
\hline
 \multirow{4}{*}{$K=1000$} & $\alpha=0.02$ & 512.91 & \multirow{4}{*}{$\alpha=0.02$} & \multirow{4}{*}{403.88} & 21.26\%  \\
 & $\alpha=0.10$ & 509.25 & & & 20.69\%\\
 & $\alpha=0.50$ & 482.78 & & & \textbf{16.34\%}\\
 & $\alpha=1.00$ & 501.52 & & & 19.47\%\\
\hline
 \multirow{4}{*}{$K=200$} & $\alpha=0.02$ & 235.6 & \multirow{4}{*}{$\alpha=0.02$} & \multirow{4}{*}{187.28} & 20.53\%  \\
 & $\alpha=0.10$ & 227.58 & & & 17.70\%\\
 & $\alpha=0.50$ & 209.69 & & & \textbf{10.69\%}\\
 & $\alpha=1.00$ & 216.25 & & & 13.39\%\\
\hline
\end{tabular}
\caption{Table of cumulative regrets of $\AdaptedOFUL$ and $\UCBours$ at the end of the 26-period time horizon and $\UCBours$'s relative improvements over $\AdaptedOFUL$ in cumulative regrets.}
\label{table:regrets}
\end{table}

Table~\ref{table:regrets} shows that $\UCBours$ improves $\AdaptedOFUL$'s best performance over a set of $\alpha$ values by 12.71\%, 16.34\% and 10.69\%, for $K=2000$, $K=1000$ and $K=200$ respectively.
It is interesting to note that the best improvement comes from the experiment with $K=1000$, rather than the one with the largest value $K=2000$.
One main reason is that, for a fixed $N$ number of products, if $K$ is increased above a certain threshold, then $\AdaptedOFUL$ is forced to select a diversified product set, and thus closes its gap with $\UCBours$.
Indeed, in the most extreme cases, where $K=N$ or $K=1$, $\AdaptedOFUL$ and $\UCBours$ perform exactly the same.
However, when $K$ is much smaller than $N$, we expect $\UCBours$'s improvement over $\AdaptedOFUL$ to grow in a monotone fashion as $K$ increases.

The problem instance in this numerical study is clearly not the worst case for $\AdaptedOFUL$, as the algorithm's regret per period is much smaller than the worst bound $\Omega(K)$ proved in Theorem \ref{thm:lb}.
However, since the feature vectors are generated by a deep learning method, which is almost a black box to the online algorithms, there is no guarantee that, if the problem instance is repeated in other sessions or in other cities, the set of feature vectors is always in $\AdaptedOFUL$'s favor to avoid the worst-case type of regret of $\Omega(Kd)$.

Regarding the practicality of the online learning algorithm solutions, we measure the number of SKU replacements in the offered product sets over the time periods.
A high number means a large portion of the product sets need to be replaced in a period, while a low number suggests the product set only needs minimal adjustments.
The plots in Figure~\ref{fig:change} summarize the numbers of SKUs replaced between subsequent periods for both $\AdaptedOFUL$ and $\UCBours$.
For all cases of $K\in[2000,1000,200]$, the number of per-period replacements for both algorithms drops quickly in the first ten periods, and it converges to around $10\%$ of the size of product set near the end of the first 50 periods.
Hence, we numerically show that both algorithms $\AdaptedOFUL$ and $\UCBours$ do not replace large portions of the product sets indefinitely often.
Moreover, the simulation results further suggest that, compared to $\AdaptedOFUL$ (with the best $\alpha$ parameter), $\UCBours$ reduces the total number of SKU replacements in the first 50 periods by 4.86\%, 10.73\% and 16.84\%, for $K=2000,1000$ and $200$ respectively.

\begin{figure}[h]
    \centering
\begin{subfigure}{.33\textwidth}
  \centering
  \includegraphics[width=0.99\linewidth]{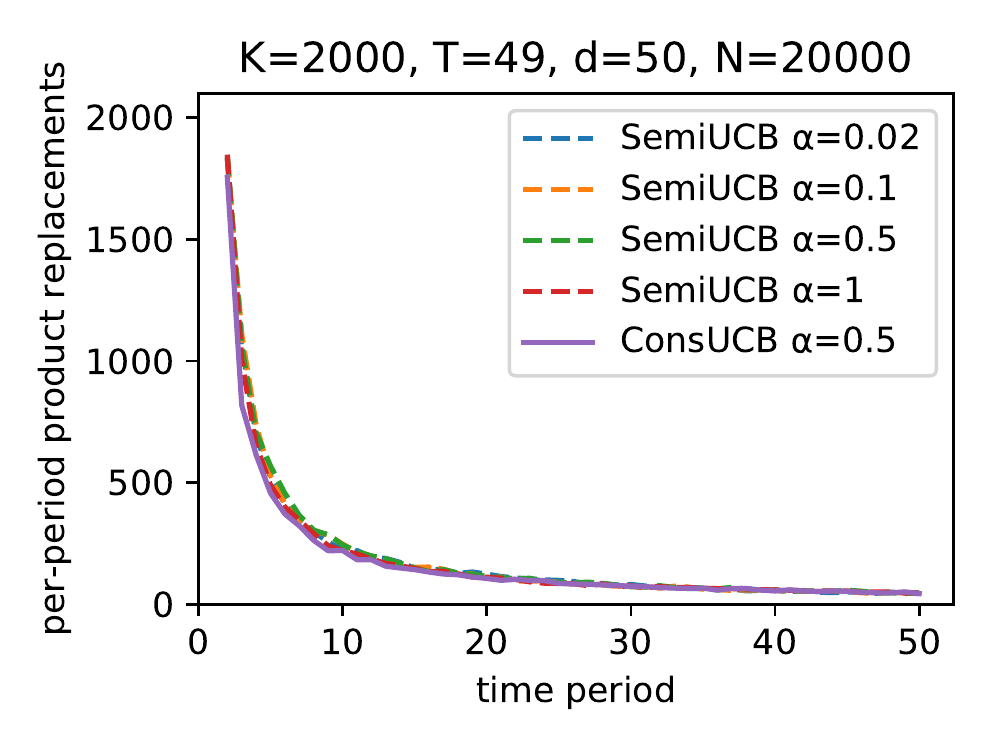}
  \label{fig:sub1}
\end{subfigure}%
\begin{subfigure}{.33\textwidth}
  \centering
  \includegraphics[width=0.99\linewidth]{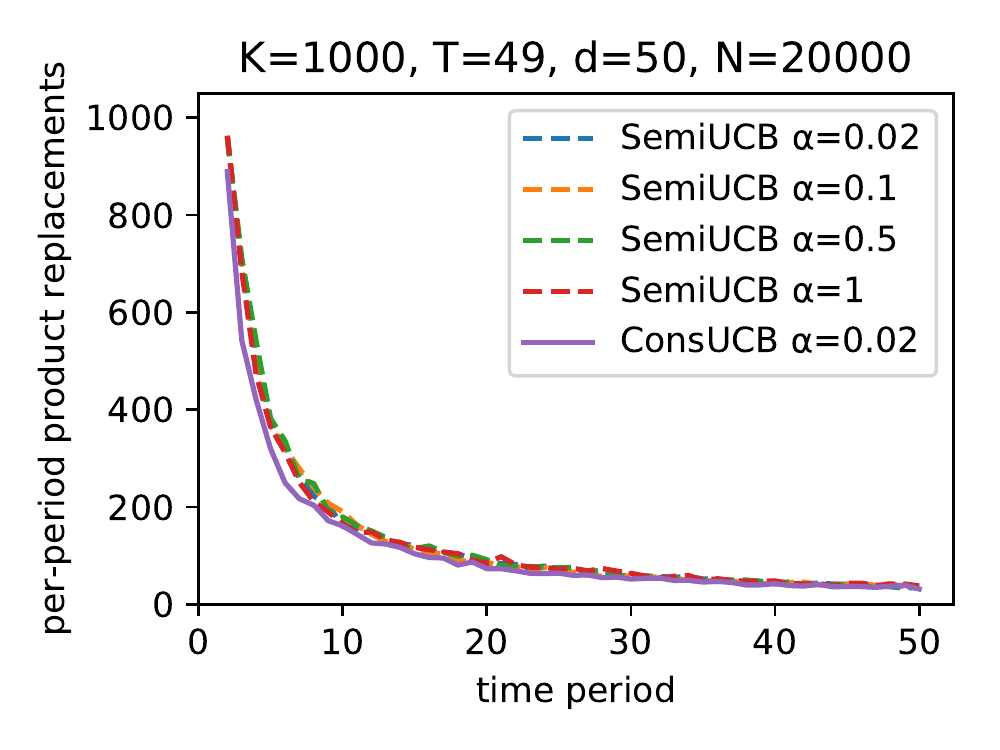}
  \label{fig:sub2}
\end{subfigure}%
\begin{subfigure}{.33\textwidth}
  \centering
  \includegraphics[width=0.99\linewidth]{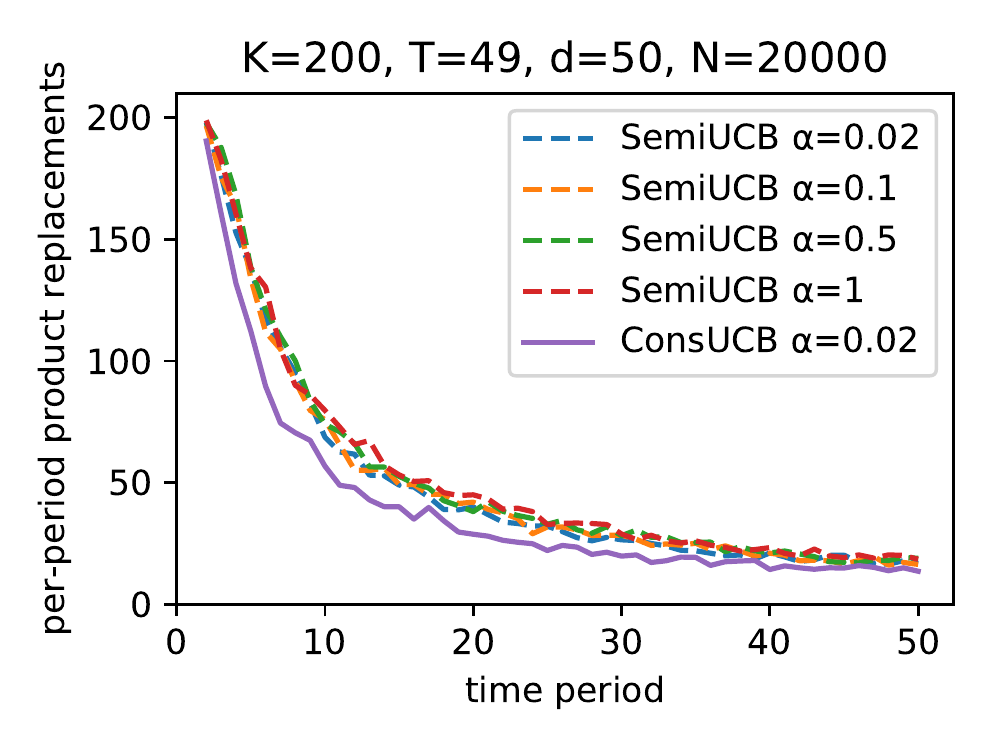}
  \label{fig:sub3}
\end{subfigure}
    \caption{Per-period number of SKU replacements in the offered product sets by $\AdaptedOFUL$ (with $\omega=1$) and $\UCBours$ in the first $T=50$ periods, when $K=2000$, $K=1000$ and $K=200$.}
    \label{fig:change}
\end{figure}

Therefore, given the simulation results on the improvement of cumulative regret and reduction of product set replacements, $\UCBours$ is the better option than $\AdaptedOFUL$, both theoretically and experimentally, for this e-commerce problem setting.

\section{Conclusion and Insights for Supply Chain Managers}
\label{sec:conclusion}

In this paper, we study a product selection problem inspired by the crucial use of urban warehouses by online retailers for ultra-fast delivery services.
We formulate the problem in a semi-bandit model with linear generalization in product features.
Our alternative analysis of an existing standard UCB algorithm suggests the regret can be interpreted as the sum of two parts, i.e.\ a $T$-dependent ``variable cost'' part and a $T$-independent ``fixed cost'' part.
We propose a novel online learning algorithm called $\UCBours$, and show it improves the fixed-cost part of the regret, while keeping the variable cost in the same order.
In the specific model setting of the product selection problem in this paper, where $K$ is much larger than $T$ or $d$, the improvement in regret is even more significant, both in theory and in a numerical study on an online retailer's data.

One extension of the model is to incorporate the actual number of sales of each product, although the model in this paper is concerned only with products' probabilities of positive sales.
In many e-commerce businesses, multiple days of on-hand safety stock are always maintained in the inventory, regardless of inventory holding costs, to keep stock-out probabilities at a very low level, for the sake of customer experience.
As a result, retailers can most often directly observe the exact level of customer demand for each product offered on the fast-delivery platform (in other words, the observation of demand is not censored due to the limited inventory).
To incorporate the number of sales into our model, we can re-define $x_i^T \theta^*$ as the expected number of sales of product $i$, and replace the 0-1 bandit feedback with a random non-negative demand for the product.
The corresponding confidence bounds can be adapted following standard UCB techniques in the work by \citet{NIPS2011_4417}.

In the problem setting, we implicitly assume the positive-sales probabilities are the same across all periods.
Nevertheless, we can easily extend the model to capture demand changes due to day-of-the-week (or week-of-the-month) effects in the problem.
For example, in each period $t$, the probability of positive-sales for any product $i$ can be assumed to be $\gamma_t x_i^{\top} \theta^*$, where $\gamma_t$ can take seven different values scaling the demand for each day of the week.
As the scaling factors $\gamma_t$ can be easily estimated from aggregate sales data, the regret bound of any algorithm can be readily updated by rescaling its regret terms in all periods using the same factors.

For the purpose of presenting a clean model, we leave the previous potential extensions out of the model, since the main insights and techniques in our algorithms and analyses remain the same.
However, in reality, the challenges that urban-warehouse retailers face in optimizing their products/inventory are not merely selecting the top products; it is more complicated than the model even with the above extensions.
There may be shipping capacities into and out of urban warehouses.
There may be costs associated with retrieving unsold products from urban warehouses.
There may be city regions that can be simultaneously supplied by multiple urban warehouses, each with different product offer sets.

In the model, we assume the retailers have the ability to freely adjust the offered product set in each period according to the online learning algorithm.
It is clear that our model does not rule out the possibility of regularly replacing the entire offered product set in an urban warehouse on a weekly basis.
This is not only financially unviable, but practically irrational for retailers to do so on a regular basis, considering the shipping cost and other related costs.
Hence, in Section~\ref{sec:numerical}, we numerically show that our online learning algorithms limit the total number of products replaced out of urban warehouses within a reasonable range, compared to the total number of the products offered.
This further indicates that the costs related to product replacement is contained and converges as the learning becomes more accurate.
In simpler words, when using our online learning algorithms, replacing the a large portion of the offered product set on a regular basis is unlikely; even when it happens, it tends to happen in the early periods, which is more justifiable given the business objective on optimizing customer experience and user growth.

Moreover, it is also reasonable for one to suggest other business objectives in the model, for example, to maximize revenue.
This is indeed a different objective than that we define in the model, because additional factors like pricing may need to be considered.
Specifically, the objective defined in this paper is to identify products with top probabilities of meeting customer's demands, rather than products with top expected revenues, among the less popular products.
Nonetheless, we point out that, as the most popular products capture most of the revenue, the less popular products are offered to increase the retail platforms' product breadth and improve customer experience and retention.
In this case, revenue optimization may not be the top priority in our product selection problem, and, after all, revenue is just one of the performance indicators, that retailers consider in making business decisions, among market share, number of users, profit, and others.

Indeed, this research proposes and analyzes only an abstraction of the actual product/inventory optimization problem.
Nonetheless, the insights provided by the $\UCBours$ algorithm go beyond product selection:
(a) from a high level, we can view $\UCBours$ as an algorithm for generating a \emph{ranking} of products, as products are chosen by the algorithm sequentially without replacement in each period.
Analyses in this paper show that, compared to the standard UCB algorithm, $\UCBours$ encourages exploration with a guaranteed smaller fixed-cost regret, potentially outperforming other general machine learning algorithms producing rankings of products as well.
(b) In many inventory problem solvers, the demand forecast of every product is given as part of the input.
With $\UCBours$, we can use its conservative score value $p_{t,k}(\cdot)$ as a proxy for demand forecast.
This is another quick way of directly applying our conservative exploration technique to real business decisions.

\newpage

\begin{APPENDIX}{Proof of Lemma \ref{lm:Xinshang_lemma}}

\label{appendix:proofs}

Before proving Lemma~\ref{lm:Xinshang_lemma}, we first prove the following results:

\begin{lemma} \label{lm:key0}
Let $u \in \bR^d$ be any vector.
Let $x \in \bR^d$ be any unit vector such that $\|x\|_2 = 1$.
For any positive definite diagonal matrix $\Lambda \in \bR^{d\times d}$, we have
\[
\frac{(x^\top \Lambda u)^2}{\sqrt{x^\top \Lambda x}} \leq u^\top \Lambda^{1.5} u.
\]
\end{lemma}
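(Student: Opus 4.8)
\textbf{Proof proposal for Lemma~\ref{lm:key0}.}

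The plan is to diagonalize the problem and reduce the inequality to the Cauchy--Schwarz inequality in a suitably weighted inner product. Since $\Lambda$ is already diagonal and positive definite, write $\Lambda = \mathrm{diag}(\lambda_1, \ldots, \lambda_d)$ with each $\lambda_i > 0$, and let $x = (x_1,\ldots,x_d)$, $u = (u_1,\ldots,u_d)$. Then $x^\top \Lambda u = \sum_i \lambda_i x_i u_i$, $x^\top \Lambda x = \sum_i \lambda_i x_i^2$, and $u^\top \Lambda^{1.5} u = \sum_i \lambda_i^{3/2} u_i^2$. So the claim becomes
\[
\Bigl(\sum_i \lambda_i x_i u_i\Bigr)^2 \leq \Bigl(\sum_i \lambda_i x_i^2\Bigr)^{1/2} \Bigl(\sum_i \lambda_i^{3/2} u_i^2\Bigr),
\]
using $\|x\|_2 = 1$, i.e.\ $\sum_i x_i^2 = 1$, somewhere along the way.

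The natural route is a two-step application of Cauchy--Schwarz. First, split $\lambda_i x_i u_i = (\lambda_i^{3/4} u_i)\cdot(\lambda_i^{1/4} x_i)$ and apply Cauchy--Schwarz:
\[
\Bigl(\sum_i \lambda_i x_i u_i\Bigr)^2 \leq \Bigl(\sum_i \lambda_i^{3/2} u_i^2\Bigr)\Bigl(\sum_i \lambda_i^{1/2} x_i^2\Bigr).
\]
It then remains to show $\sum_i \lambda_i^{1/2} x_i^2 \leq \bigl(\sum_i \lambda_i x_i^2\bigr)^{1/2}$. This is exactly Jensen's inequality (or power-mean / Cauchy--Schwarz again) applied to the probability weights $x_i^2$ that sum to $1$: by concavity of $t \mapsto \sqrt{t}$,
\[
\sum_i x_i^2 \sqrt{\lambda_i} \leq \sqrt{\sum_i x_i^2 \lambda_i},
\]
or equivalently square the left side and apply Cauchy--Schwarz as $\sum_i \lambda_i^{1/2} x_i^2 = \sum_i (\lambda_i^{1/2} x_i)\cdot x_i \leq (\sum_i \lambda_i x_i^2)^{1/2}(\sum_i x_i^2)^{1/2} = (\sum_i \lambda_i x_i^2)^{1/2}$. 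Chaining the two bounds gives the result, with the only subtlety being the harmless case $x^\top \Lambda x = 0$, which cannot occur here since $\Lambda \succ 0$ and $x \neq 0$.

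I expect no real obstacle: the main thing to get right is the choice of how to split the factors in the first Cauchy--Schwarz step so that the leftover term is precisely the quantity controlled by the unit-norm constraint on $x$. Everything else is bookkeeping, and the lemma is purely algebraic with no probabilistic or bandit-specific content.
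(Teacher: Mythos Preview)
Your proof is correct. Both your argument and the paper's hinge on the same $3/4,\,1/4$ exponent split, but the executions differ: the paper rewrites the difference as
\[
u^\top \Lambda^{0.75}\Bigl(I - \tfrac{\Lambda^{0.25} x x^\top \Lambda^{0.25}}{\sqrt{x^\top \Lambda x}}\Bigr)\Lambda^{0.75} u
\]
and then verifies that the inner matrix is positive semidefinite by checking all of its principal minors, whereas you go straight through two applications of Cauchy--Schwarz (equivalently, one Cauchy--Schwarz plus Jensen). Both routes ultimately reduce to the scalar inequality $x^\top \Lambda^{1/2} x \le (x^\top \Lambda x)^{1/2}$ under $\|x\|_2=1$; your version reaches it more directly and avoids the principal-minor bookkeeping, while the paper's formulation makes the quadratic-form structure explicit. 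Either is fine here.
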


\proof{Proof.}
We have
\[
u^\top \Lambda^{1.5} u - \frac{(x^\top \Lambda u)^2}{\sqrt{x^\top \Lambda x}} = u^\top \left( \Lambda^{1.5} - \frac{ \Lambda x x^\top \Lambda}{\sqrt{x^\top \Lambda x}}\right) u = u^\top \Lambda^{0.75} \left( I - \frac{ \Lambda^{0.25} x x^\top \Lambda^{0.25}}{\sqrt{x^\top \Lambda x}} \right) \Lambda^{0.75} u.
\]

Thus, it suffices to verify that $I - \frac{ \Lambda^{0.25} x x^\top \Lambda^{0.25}}{\sqrt{x^\top \Lambda x}}$ is positive semi-definite.
This is equivalent to showing all of its principal minors are nonnegative.

For any subset $J$ of the row (column) index set $[d]$, let $\Lambda_{J,J}$ be a principal submatrix of any matrix $\Lambda$ with rows and columns whose indices are in $J$.
Similarly, let $x_{j}$ be a subvector of any vector $x$ with elements whose indices are in $J$.
The principal minor of $I- \frac{ \Lambda^{0.25} x x^\top \Lambda^{0.25}}{\sqrt{x^\top \Lambda x}}$ with respect to index set $J$ is
\[
\text{det}\left(I_{J,J}- \frac{ \Lambda^{0.25}_{J,J} x_{J} x_{J}^\top \Lambda^{0.25}_{J,J}}{\sqrt{x^\top \Lambda x}}\right)
=
1-\frac{ x_{J}^\top \Lambda^{0.5}_{J,J} x_{J}}{\sqrt{x^\top \Lambda x}}.
\]
Since $\Lambda$ is a diagonal matrix,
\[
x_{J}^\top \Lambda^{0.5}_{J,J} x_{J}=\sum_{i\in J}\sqrt{\Lambda_{i,i}}x_i^2\leq\sum_{i=1}^d\sqrt{\Lambda_{i,i}}x_i^2=x^\top\Lambda^{0.5} x
\quad
\text{and}\quad
\sqrt{x^\top \Lambda x}=\sqrt{\sum_{i=1}^d\Lambda_{i,i}x_i^2}=\|\Lambda^{0.5}x\|_2.
\]
Hence, $1-\frac{ x_{J}^\top \Lambda^{0.5}_{J,J} x_{J}}{\sqrt{x^\top \Lambda x}}\geq1-x^\top\frac{\Lambda^{0.5} x}{\|\Lambda^{0.5}x\|_2}$.
Since $\Lambda$ is positive definite and $x$ is a unit vector, $\frac{\Lambda^{0.5} x}{\|\Lambda^{0.5}x\|_2}$ is also a unit vector, and, therefore, $1-x^\top\frac{\Lambda^{0.5} x}{\|\Lambda^{0.5}x\|_2}\geq0$.
\halmos
\endproof

The following lemma in \citet{auer2002using} is also needed for proving our Lemma \ref{lm:Xinshang_lemma}.
\begin{lemma}[{\citet{auer2002using}, Lemma 19}]
\label{lm:auerA}
Let $\lambda_1\geq\cdots\geq\lambda_d\geq0$.
The eigenvalues $\nu_1,\ldots,\nu_d$ of a matrix $\Delta(\lambda_1,\ldots,\lambda_d)+zz^\top$ with $\|z\|_2\leq1$ can be arranged such that there are $y_{h,j}\geq 0$, $1\leq h < j\leq d$, and the following holds:
\begin{align*}
&\nu_j\geq\lambda_j,\\
&\nu_j=\lambda_j+z_j^2-\sum_{h=1}^{j-1}y_{h,j}+\sum_{h=j+1}^dy_{j,h}\\
&\sum_{h=1}^{j-1}y_{h,j}\leq z_j^2\\
&\sum_{h=j+1}^dy_{j,h}\leq\nu_j-\lambda_j,\\
&\sum_{j=1}^d\nu_j=\sum_{j=1}^d\lambda_j+\|z\|^2_2.
\end{align*}
\end{lemma}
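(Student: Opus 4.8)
The plan is to derive the three scalar conclusions of Lemma~\ref{lm:auerA} in increasing order of difficulty, reserving the construction of the flow variables $y_{h,j}$ for last and treating it as a transportation-feasibility problem whose feasibility is certified by a majorization inequality obtained from Ky~Fan's maximum principle. Write $D=\Delta(\lambda_1,\ldots,\lambda_d)$ and $M=D+zz^\top$, and order the eigenvalues $\nu_1\geq\cdots\geq\nu_d$ of $M$ decreasingly to match $\lambda_1\geq\cdots\geq\lambda_d$. Since $zz^\top$ is positive semidefinite we have $M\succeq D$, so the Courant--Fischer min-max characterization gives $\nu_j\geq\lambda_j$ for every $j$, which is the first bullet (and guarantees that the vector $b:=(\nu_j-\lambda_j)_j$ is nonnegative). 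Taking traces, $\sum_j\nu_j=\mathrm{tr}(D)+\mathrm{tr}(zz^\top)=\sum_j\lambda_j+\|z\|_2^2$, which is the last bullet.

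The crux is the prefix inequality $\sum_{j=1}^m(\nu_j-\lambda_j)\geq\sum_{j=1}^m z_j^2$ for every $m\in[d]$. I would prove it with Ky~Fan's maximum principle: for a symmetric matrix, the sum of the $m$ largest eigenvalues equals $\max_{\dim V=m}\mathrm{tr}(P_V M)$ over $m$-dimensional subspaces $V$, with $P_V$ the orthogonal projection onto $V$. Choosing the coordinate subspace $V=\mathrm{span}(e_1,\ldots,e_m)$ yields
\[
\sum_{j=1}^m\nu_j\;\geq\;\mathrm{tr}(P_V D)+z^\top P_V z\;=\;\sum_{j=1}^m\lambda_j+\sum_{j=1}^m z_j^2,
\]
because $D$ is diagonal with entries $\lambda_j$ in coordinate order and $z^\top P_V z=\sum_{j\leq m}z_j^2$. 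Together with the trace identity (which forces equality at $m=d$), this says exactly that the nonnegative vector $b=(\nu_j-\lambda_j)_j$ dominates $a:=(z_j^2)_j$ in all prefix sums and has the same total mass.

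To build the flow, I would interpret $y_{h,j}$ as a transport plan and seek a nonnegative matrix $w_{h,j}$ supported on $\{h\leq j\}$ whose column sums are $a_j=z_j^2$ and whose row sums are $b_h=\nu_h-\lambda_h$, then set $y_{h,j}:=w_{h,j}$ for $h<j$ and keep the diagonal mass $w_{j,j}$ as the ``stays-in-place'' portion. The lower-triangular support means that the supply in the first $m$ columns can be routed only into the first $m$ rows, so Gale's/Hall's feasibility criterion for this staircase transportation problem is precisely the prefix-sum domination of the previous paragraph together with equal totals; hence $w$ exists (and can be exhibited by a northwest-corner-type greedy). The remaining bullets then follow directly from the two marginal constraints: the balance identity
\[
\nu_j-\lambda_j=z_j^2-\sum_{h<j}y_{h,j}+\sum_{h>j}y_{j,h}
\]
holds by the column/row sums, while $\sum_{h<j}y_{h,j}=z_j^2-w_{j,j}\leq z_j^2$ and $\sum_{h>j}y_{j,h}=(\nu_j-\lambda_j)-w_{j,j}\leq\nu_j-\lambda_j$.

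The main obstacle is this last step. The eigenvalue facts in the first two paragraphs are standard, so the real content is establishing that prefix-sum domination is not merely necessary but \emph{sufficient} for feasibility of the staircase transportation problem, and then checking that the nonnegative diagonal mass $w_{j,j}$ makes \emph{both} inequality constraints (outflow $\leq z_j^2$ and inflow $\leq\nu_j-\lambda_j$) hold simultaneously. I would also take care that the index convention is consistent throughout -- $\lambda_j$ and $\nu_j$ both decreasing, with $z_j$ aligned to the coordinate carrying $\lambda_j$ -- since Ky~Fan's bound and the transport construction must use the same labelling of coordinates for the $z_j^2$ terms to match.
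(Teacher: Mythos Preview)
The paper does not prove this lemma; it is quoted verbatim from \citet{auer2002using} (their Lemma~19) and used as a black box in the proof of Lemma~\ref{lm:key1}. So there is no ``paper's own proof'' to compare against.

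Your argument is nonetheless sound. The monotonicity $\nu_j\geq\lambda_j$ and the trace identity are immediate, and your key step---using Ky~Fan's maximum principle with the coordinate subspace $\mathrm{span}(e_1,\dots,e_m)$ to obtain the prefix inequality $\sum_{j\leq m}(\nu_j-\lambda_j)\geq\sum_{j\leq m}z_j^2$---is correct and clean, relying precisely on the hypothesis that the diagonal entries $\lambda_j$ are already in decreasing order. The reduction of the flow construction to a staircase transportation problem is also correct: columns $1,\dots,m$ can receive only from rows $1,\dots,m$, so Hall's condition is exactly your prefix inequality, and together with equal totals it guarantees a feasible nonnegative $w$ (a northwest-corner greedy exhibits one). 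Your derivation of the balance identity and the two inequality bounds from the column and row marginals of $w$ is exactly right, since the diagonal mass $w_{j,j}\geq 0$ is subtracted from both sides. One minor observation: the hypothesis $\|z\|_2\leq 1$ is never used in your argument, which is fine---the lemma in fact holds for arbitrary $z$.
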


\begin{lemma} \label{lm:key1}
Let $A \in \bR^{d\times d}$ be any symmetric positive definite matrix, and $u \in \bR^d$ be any vector.
Let  $\lambda_1, \lambda_2, \ldots, \lambda_d$ be the eigenvalues of $A$, and $\nu_1,\nu_2,\ldots, \nu_d$ be the eigenvalues of $A + u u^\top$.
We must have for any $x \in \bR^d$ such that $\|x\|_2 = 1$,
\[
\sqrt{x^\top A\inv x} - \sqrt{x^\top (A + uu^\top)\inv x} \leq  \sum_{i=1}^d \frac{2}{\sqrt{\lambda_i}} - \sum_{i=1}^d \frac{2}{\sqrt{\nu_i}}.
\]
\end{lemma}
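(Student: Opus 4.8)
The plan is to join $A$ and $A+uu^\top$ by a one-parameter family of rank-one updates and to compare the two sides of the claimed inequality through their derivatives along this family. For $s\in[0,1]$ put $M(s)=A+s\,uu^\top$; since $A$ is positive definite, each $M(s)$ is symmetric positive definite. Write $\mu_1(s),\dots,\mu_d(s)$ for the eigenvalues of $M(s)$, so that $\mu_i(0)=\lambda_i$ and $\mu_i(1)=\nu_i$, and set
\[
\phi(s)=\sqrt{x^\top M(s)\inv x},\qquad \psi(s)=\sum_{i=1}^d\frac{2}{\sqrt{\mu_i(s)}}=2\,\mathrm{tr}\bigl(M(s)^{-1/2}\bigr).
\]
Then $\phi(0)-\phi(1)$ and $\psi(0)-\psi(1)$ are precisely the left- and right-hand sides of the lemma, so it is enough to show $\phi(0)-\phi(1)\le\psi(0)-\psi(1)$. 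As $s\mapsto M(s)$ is a smooth curve inside the open cone of positive definite matrices, $\phi$ and $\psi$ are continuously differentiable on $[0,1]$, and I would prove the inequality by comparing $\phi'$ and $\psi'$ pointwise and then integrating.

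Next I would compute the derivatives. From $\tfrac{d}{ds}M(s)\inv=-M(s)\inv uu^\top M(s)\inv$ together with the symmetry of $M(s)\inv$,
\[
\phi'(s)=-\frac{(x^\top M(s)\inv u)^2}{2\sqrt{x^\top M(s)\inv x}},
\]
and from the identity $\tfrac{d}{ds}\mathrm{tr}\bigl(g(M(s))\bigr)=\mathrm{tr}\bigl(g'(M(s))\,\dot M(s)\bigr)$ with $g(t)=t^{-1/2}$,
\[
\psi'(s)=-\,u^\top M(s)^{-3/2}u.
\]
Both derivatives are nonpositive, hence $\phi(0)-\phi(1)=\int_0^1|\phi'(s)|\,ds$ and $\psi(0)-\psi(1)=\int_0^1|\psi'(s)|\,ds$, and it suffices to establish $|\phi'(s)|\le|\psi'(s)|$ for each $s$.

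The crux is thus the pointwise bound
\[
\frac{(x^\top M(s)\inv u)^2}{\sqrt{x^\top M(s)\inv x}}\ \le\ u^\top M(s)^{-3/2}u,
\]
which gives $|\phi'(s)|\le\tfrac12|\psi'(s)|\le|\psi'(s)|$. To prove it at a fixed $s$, diagonalize $M(s)=QDQ^\top$ with $Q$ orthogonal and $D$ positive definite diagonal, and set $\tilde x=Q^\top x$ (again a unit vector, since $Q$ is orthogonal) and $\tilde u=Q^\top u$. The bound then reads $(\tilde x^\top D\inv\tilde u)^2/\sqrt{\tilde x^\top D\inv\tilde x}\le\tilde u^\top(D\inv)^{1.5}\tilde u$, which is exactly the conclusion of Lemma~\ref{lm:key0} applied with the diagonal positive definite matrix $D\inv$ in the role of $\Lambda$. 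Integrating $|\phi'|\le|\psi'|$ over $[0,1]$ then yields $\phi(0)-\phi(1)\le\psi(0)-\psi(1)$, which is the lemma.

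The one point requiring care is the rigorous justification of the matrix-calculus identities used above, especially the differentiability of $s\mapsto\mathrm{tr}(M(s)^{-1/2})$ and the trace-derivative formula; these are standard on the positive definite cone but should be stated or verified. A more elementary route — apparently the one anticipated by the paper, given that Lemma~\ref{lm:auerA} is included — would instead reduce to $A=\Lambda$ diagonal, apply Sherman--Morrison to write $x^\top(\Lambda+uu^\top)\inv x=x^\top\Lambda\inv x-(x^\top\Lambda\inv u)^2/(1+u^\top\Lambda\inv u)$, use $\sqrt{c}-\sqrt{c-e}\le e/\sqrt{c}$ and then Lemma~\ref{lm:key0} to bound the left side of the lemma by $u^\top\Lambda^{-3/2}u/(1+u^\top\Lambda\inv u)$, and finally bound this ratio by $\sum_i 2(\lambda_i^{-1/2}-\nu_i^{-1/2})$ using the interlacing relations of Lemma~\ref{lm:auerA}. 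On that route the main obstacle is that the denominator $1+u^\top\Lambda\inv u$ cannot simply be discarded — doing so already overshoots the target $\sum_i 2(\lambda_i^{-1/2}-\nu_i^{-1/2})$ when $d=1$ — so one is committed to the detailed combinatorial bookkeeping with the quantities $y_{h,j}$ provided by Lemma~\ref{lm:auerA}.
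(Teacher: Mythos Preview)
Your argument is correct and shares the paper's overall interpolation strategy: set $M(s)=A+s\,uu^\top$, compare the derivatives of the two sides along $s$, and integrate. The substantive difference lies in how you handle $\psi'(s)$. You recognize $\psi(s)=2\,\mathrm{tr}\bigl(M(s)^{-1/2}\bigr)$ and differentiate directly via the trace--chain-rule identity to obtain the exact value $\psi'(s)=-u^\top M(s)^{-3/2}u$, after which a single application of Lemma~\ref{lm:key0} gives $|\phi'|\le\tfrac12|\psi'|$. The paper instead treats $\psi$ as a sum over individual eigenvalues and extracts its derivative through a discrete approximation: it invokes Lemma~\ref{lm:auerA} (Auer's interlacing relations) to control each $\lambda_i^{(s+\delta)}-\lambda_i^{(s)}$, passes to the limit $\delta\to0$, and after a fair amount of bookkeeping obtains only the inequality $\tfrac{d}{ds}\sum_i(\lambda_i^{(s)})^{-1/2}\le-\tfrac14\sum_i\tilde u_i^2(\lambda_i^{(s)})^{-3/2}$, a factor of two weaker than the exact value your formula yields. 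Your route is shorter and dispenses with Lemma~\ref{lm:auerA} entirely; the paper's route trades that brevity for avoiding the trace-derivative formula (so your caveat about justifying it is well placed). Your final-paragraph guess about the paper's method is slightly off: the paper does interpolate, using Sherman--Morrison only to compute $\phi'$, and deploys Lemma~\ref{lm:auerA} \emph{inside} the interpolation to approximate $\psi'$ rather than as an alternative to interpolation.
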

\proof{Proof.}
Define matrix $A(s) = A + s \cdot uu^\top$ for all $s \in [0,1]$.
Let $\ls_1, \ls_2,...,\ls_d$ be the eigenvalues of $A(s)$, for all $s \in [0,1]$.
Without loss of generality, suppose $\ls_1 \geq \ls_2 \geq \cdots \geq \ls_d$.
Define function $f(s) = \sqrt{x^\top A(s)\inv x}$ for all $s \in [0,1]$.
The theorem can be equivalently written as
\[
f(0) - f(1) = - \int_0^1 f'(s) ds \leq \sum_{i=1}^d \frac{2}{\sqrt{\lambda_i^{(0)}}} - \sum_{i=1}^d \frac{2}{\sqrt{\lambda_i^{(1)}}},
\]
where $f'(s)$ is the derivative of $f(s)$.
It suffices to prove that, for all $s \in [0,1]$,
\[
\sum_{i=1}^d \frac{2}{\sqrt{\ls_i}}\leq f'(s) \leq 0.
\]
 
We have
\begin{align*}
f'(s) = & \frac{d}{ds} \sqrt{x^\top A(s)\inv x}\\
= & \frac{ 0.5 }{ \sqrt{x^\top A(s)\inv x} } \cdot \frac{d}{ds} \left[ x^\top A(s)\inv x\right]\\
= & \frac{ 0.5 }{ \sqrt{x^\top A(s)\inv x} } \cdot \lim_{\delta \to 0} \frac{x^\top \left(A(s) +\delta \cdot  u u^\top\right) \inv x -  x^\top A(s)\inv x}{\delta}\\
= & \frac{ 0.5 }{ \sqrt{x^\top A(s)\inv x} } \cdot \lim_{\delta \to 0} \frac{x^\top \left(A(s)\inv - \frac{\delta A(s)\inv u u^\top A(s)\inv}{1 + \delta u^\top A(s)\inv u} \right)  x-   x^\top A(s)\inv x}{\delta} \tag{by the Sherman-Morrison formula}\\
= & - \frac{ 0.5 }{ \sqrt{x^\top A(s)\inv x} } \cdot \lim_{\delta \to 0} \frac{(x^\top A(s)\inv u)^2}{1 + \delta u^\top A(s)\inv u} \\
= &  -0.5 \frac{(x^\top A(s)\inv u)^2}{\sqrt{x^\top A(s)\inv x}}.
\end{align*}
Thus, $f'(s) \leq 0$.

Let $A(s)\inv = U(s)\Lambda(s)\inv U(s)^\top$ be the eigendecomposition of $A(s)\inv$.
Let $\tilde x = U^\top x$ and $\tilde u = U^\top u$.
Using Lemma \ref{lm:key0}, we obtain
\begin{align}
 f'(s) & =  -0.5 \frac{(x^\top A(s)\inv u)^2}{\sqrt{x^\top A(s)\inv x}} \nonumber\\
 &  =  -0.5 \frac{(\tilde x^\top \Lambda(s)\inv \tilde u)^2}{\sqrt{\tilde x^\top \Lambda(s)\inv \tilde x}} \nonumber\\
 & \geq -  0.5 \sum_{i=1}^d  \tilde u_i^2 (\ls_i)^{-1.5}.\label{eq:key1bound1}
 \end{align}

For small $ \delta > 0$, we can write $$A(s+\delta) = A(s) + \delta u u^\top = U(s) (\Lambda(s) + \delta \tilde u \tilde u^\top) U(s)^\top.$$
We use Lemma \ref{lm:auerA} to obtain the following properties
\begin{itemize}
\item There are non-negative numbers $y_{h,i}(s,\delta)$, for $i =1,2,...,d$, $h=1,2,...,d$ and $h\not=i$, such that
\begin{equation}\label{eq:key1a}
 \lsd_i = \ls_i + \delta \tilde u_i^2 - \sum_{h=1}^{i-1} y_{h,i}(s,\delta) + \sum_{h=i+1}^d y_{i,h}(s,\delta).
 \end{equation}
\item For any $\ls_h > \ls_i + \delta \|\tilde u\|_2^2$,
\begin{equation}\label{eq:key1b}
y_{h,i}(s,\delta) \leq \frac{\delta^2 \tilde u_i^2 \tilde u_h^2}{ \ls_h - \ls_i - \delta \|\tilde u\|_2^2 }.
\end{equation}
\item For all $i=1,2,...,d$,
\begin{equation}\label{eq:key1c}
 \sum_{h=i+1}^d y_{i,h}(s,\delta) \leq \lsd_i - \ls_i.
 \end{equation}
\end{itemize}

Suppose $\delta$ is small enough so that $\delta \|\tilde u\|_2^2 \leq \sqrt{\delta} \ls_d$. For any $\ls_h > \ls_i + \sqrt{\delta} \ls_d$, \eqref{eq:key1b} further gives
\begin{equation}\label{eq:key1d}
 y_{h,i}(s,\delta) \leq \frac{\delta^2 \tilde u_i^2 \tilde u_h^2}{ \ls_h - \ls_i - \delta \|\tilde u\|_2^2 } \leq \frac{\delta^2 \tilde u_i^2 \tilde u_h^2}{ \sqrt{\delta} \ls_d - \delta \|\tilde u\|_2^2 }. 
 \end{equation}

For $\ls_h \leq \ls_i + \sqrt{\delta} \ls_d$,
\begin{align}
& \sum_{i=1}^d  (\ls_i)^{-1.5} \sum_{h < i: \ls_h \leq \ls_i + \sqrt{\delta} \ls_d} y_{h,i}(s,\delta)\nonumber\\
= & \sum_{i=1}^d  \sum_{h < i: \ls_h \leq \ls_i + \sqrt{\delta} \ls_d} y_{h,i}(s,\delta)\frac{(\ls_h)^{1.5}}{(\ls_i)^{1.5}}  (\ls_h)^{-1.5} \nonumber\\
\leq & \sum_{i=1}^d  \sum_{h < i: \ls_h \leq \ls_i + \sqrt{\delta} \ls_d} y_{h,i}(s,\delta)\left( \frac{\ls_i + \sqrt{\delta} \ls_d}{\ls_i}\right)^{1.5}  (\ls_h)^{-1.5} \nonumber\\
\leq & \sum_{i=1}^d  \sum_{h < i: \ls_h \leq \ls_i + \sqrt{\delta} \ls_d} y_{h,i}(s,\delta)\left( 1 + \sqrt{\delta} \right)^{1.5}  (\ls_h)^{-1.5} \nonumber\\
\leq &\left( 1 + \sqrt{\delta} \right)^{1.5}  \sum_{h=1}^d  \sum_{i=h+1}^d y_{h,i}(s,\delta)  (\ls_h)^{-1.5} \nonumber\\
\leq &\left( 1 + \sqrt{\delta} \right)^{1.5}  \sum_{h=1}^d  (\lsd_h - \ls_h) (\ls_h)^{-1.5}. \label{eq:key1e}
\end{align}
The last inequality follows from \eqref{eq:key1c}.

We use \eqref{eq:key1d} and \eqref{eq:key1e} to obtain
\begin{align}
&\lim_{\delta \to 0}  \frac{1}{\delta} \sum_{i=1}^d  (\ls_i)^{-1.5} \sum_{h=1}^{i-1} y_{h,i}(s,\delta)\nonumber\\
=&\lim_{\delta \to 0}  \frac{1}{\delta} \sum_{i=1}^d  (\ls_i)^{-1.5} \left[ \sum_{h < i: \ls_h > \ls_i + \sqrt{\delta} \ls_d} y_{h,i}(s,\delta)+  \sum_{h < i: \ls_h \leq \ls_i + \sqrt{\delta} \ls_d} y_{h,i}(s,\delta) \right]\nonumber\\
\leq & \lim_{\delta \to 0}  \frac{1}{\delta} \sum_{i=1}^d  (\ls_i)^{-1.5} \left[\sum_{h < i: \ls_h > \ls_i + \sqrt{\delta} \ls_d} \frac{\delta^2 \tilde u_i^2 \tilde u_h^2}{ \sqrt{\delta} \ls_d - \delta \|\tilde u\|_2^2 }+  \sum_{h < i: \ls_h \leq \ls_i + \sqrt{\delta} \ls_d} y_{h,i}(s,\delta) \right]\nonumber\\
= & \lim_{\delta \to 0}  \frac{1}{\delta} \sum_{i=1}^d  (\ls_i)^{-1.5}  \sum_{h < i: \ls_h \leq \ls_i + \sqrt{\delta} \ls_d} y_{h,i}(s,\delta) \nonumber\\
\leq & \lim_{\delta \to 0}  \frac{1}{\delta} \left( 1 + \sqrt{\delta} \right)^{1.5}  \sum_{h=1}^d  (\lsd_h - \ls_h) (\ls_h)^{-1.5} \nonumber\\
= & \sum_{h=1}^d  (\ls_h)^{-1.5} \frac{d}{ds}\ls_h\nonumber\\
= & -2 \sum_{h=1}^d \frac{d}{ds}\frac{1}{\sqrt{\ls_h}}. \label{eq:key1f}
 \end{align}

Hence,
\begin{align*}
&\frac{d}{ds}  \sum_{i=1}^d \frac{1}{\sqrt{\ls_i}}\\
= & - 0.5 \sum_{i=1}^d (\ls_i)^{-1.5} \frac{d}{ds}\ls_i\\
= & - 0.5 \sum_{i=1}^d (\ls_i)^{-1.5} \lim_{\delta \to 0} \frac{\lsd_i - \ls_i}{\delta}\\
= & - 0.5 \sum_{i=1}^d (\ls_i)^{-1.5} \lim_{\delta \to 0} \frac{\ls_i + \delta \tilde u_i^2 - \sum_{h=1}^{i-1} y_{h,i}(s,\delta) + \sum_{h=i+1}^d y_{i,h}(s,\delta) - \ls_i}{\delta}\\
= & - 0.5 \sum_{i=1}^d (\ls_i)^{-1.5} \left( \tilde u_i^2 + \lim_{\delta \to 0} \frac{- \sum_{h=1}^{i-1} y_{h,i}(s,\delta) + \sum_{h=i+1}^d y_{i,h}(s,\delta)}{\delta}\right) \\
\leq & - 0.5 \sum_{i=1}^d (\ls_i)^{-1.5} \left( \tilde u_i^2 + \lim_{\delta \to 0} \frac{- \sum_{h=1}^{i-1} y_{h,i}(s,\delta)}{\delta}\right) \\
= & - 0.5 \sum_{i=1}^d \tilde u_i^2 (\ls_i)^{-1.5} + 0.5  \lim_{\delta \to 0} \frac{\sum_{i=1}^d  (\ls_i)^{-1.5} \sum_{h=1}^{i-1} y_{h,i}(s,\delta)}{\delta}\\
\leq & - 0.5 \sum_{i=1}^d \tilde u_i^2 (\ls_i)^{-1.5} + 0.5 \cdot \left( -2 \sum_{i=1}^d \frac{d}{ds}\frac{1}{\sqrt{\ls_i}}\right),
\end{align*}
where the last inequality follows from \eqref{eq:key1f}.
The above result leads to
\[ \frac{d}{ds}  \sum_{i=1}^d \frac{1}{\sqrt{\ls_i}} \leq - \frac{1}{4} \sum_{i=1}^d \tilde u_i^2 (\ls_i)^{-1.5} .\]
Combining this with \eqref{eq:key1bound1}, we obtain
\[ f'(s) \geq - 0.5 \sum_{i=1}^d \tilde u_i^2 (\ls_i)^{-1.5}  \geq \frac{d}{ds}  \sum_{i=1}^d \frac{2}{\sqrt{\ls_i}},\]
which proves the lemma.
\halmos
\endproof

Now we present the proof of Lemma~\ref{lm:Xinshang_lemma}.

\PROOF{Lemma \ref{lm:Xinshang_lemma}}
\proof{Proof.}

Define $A_k = A + \sum_{k'=1}^k u_{k'}u_{k'}^\top$ for all $k=0,1,2,...,l$. Let $\lambda_1^{(k)}, \lambda_2^{(k)},...,\lambda_d^{(k)}$ be the eigenvalues of $A_k$, for all $k=0,1,2,...,l$.

 Since $\sqrt{x^\top A_{k-1}\inv x} \geq \sqrt{x^\top A_k\inv x}$ for all $k=1,2,...,l$, we can use Lemma \ref{lm:key1} to obtain
\begin{align*}
&\sqrt{x^\top A\inv x} - \sqrt{x^\top (A +  \sum_{k=1}^l u_k u_k^\top)\inv x}\\
= & \sum_{k=1}^l \left[ \sqrt{x^\top A_{k-1}\inv x} - \sqrt{x^\top A_k\inv x} \right]\\
\leq & \sum_{k=1}^l \left[ \sum_{i=1}^d \frac{1}{\sqrt{\lambda_i^{(k-1)}}} - \sum_{i=1}^d \frac{1}{\sqrt{\lambda_i^{(k)}}} \right]\\
= & \sum_{i=1}^d \frac{2}{\sqrt{\lambda_i^{(0)}}} - \sum_{i=1}^d \frac{2}{\sqrt{\lambda_i^{(k)}}}\\
= & \sum_{i=1}^d \frac{2}{\sqrt{\lambda_i}} - \sum_{i=1}^d \frac{2}{\sqrt{\nu_i}}.
\end{align*}

\halmos
\endproof

\end{APPENDIX}

\bibliographystyle{ormsv080}
\bibliography{myrefs}

\end{document}